\documentclass[10pt]{amsart}
\usepackage{amsmath, amsthm, amssymb, amsfonts, verbatim}
\usepackage{array}
\usepackage[bookmarks]{hyperref}
\usepackage[active]{srcltx}
\usepackage{amscd}
\usepackage{epsfig}
\usepackage{color}
\usepackage{comment}
\usepackage{setspace}
\usepackage{textcomp}
\usepackage{stmaryrd}
\usepackage{multirow}

\numberwithin{equation}{section}

\parindent=0.5cm





\newcommand{\pmat}[1]{\begin{pmatrix} #1 \end{pmatrix}}
\newcommand{\case}[1]{\begin{cases} #1 \end{cases}}

\newcommand{\beq} {\begin{equation}}
\newcommand{\eeq} {\end{equation}}
\newcommand{\bdm} {\begin{displaymath}}
\newcommand{\edm} {\end{displaymath}}

\newcommand{\bit}{\begin{itemize}}
\newcommand{\eit}{\end{itemize}}
\newcommand{\bde}{\begin{description}}
\newcommand{\ede}{\end{description}}

\newcommand{\ben}{\begin{enumerate}}
\newcommand{\een}{\end{enumerate}}
\newcommand{\algn}[1]{\begin{align} #1 \end{align}}
\newcommand{\algns}[1]{\begin{align*} #1 \end{align*}}
\newcommand{\mltln}[1]{\begin{multline} #1 \end{multline}}
\newcommand{\mltlns}[1]{\begin{multline*} #1 \end{multline*}}

\newcommand{\barr}{\begin{array}}
\newcommand{\earr}{\end{array}}
\newcommand{\subeqns}[2]{\begin{subequations} \label{#1} \algn{#2} \end{subequations}}

\newcommand{\half} {\ensuremath{\frac{1}{2}}}
\newcommand{\mc}[1]{\mathcal{#1}}
\newcommand{\LRp}[1]{\left( #1 \right)}
\newcommand{\LRb}[1]{\left \{ #1 \right \}}
\newcommand{\LRs}[1]{\left[ #1 \right]}
\newcommand{\LRa}[1]{\left< #1 \right>}

\newcommand{\LRc}[1]{\left\{ #1 \right\}}
\newcommand{\jump}[1] {\ensuremath{\left[\![{#1}]\!\right]}}

\newcommand{\ra}{\rightarrow}

\newcommand{\e}{\epsilon}

\newcommand{\R}{{\mathbb R}}

\newcommand{\X}{{\mathbb X}}
\newcommand{\I}{\Bbb{I}}

\newcommand{\calT}{\mathcal{T}}

\renewcommand{\div}{\operatorname{div}}

\newcommand{\esssup}{\operatorname{esssup}}

\newcommand{\bs}{\boldsymbol}

\newcommand{\lap}{\Delta}
\newcommand{\pd}{\partial}
\newcommand{\nor}[1]{\left\Vert#1\right\Vert}
\newcommand{\norw}[2]{\left\Vert#1\right\Vert_{#2}}


\newtheorem{theorem}{Theorem}[section]

\newtheorem{cor}[theorem]{Corollary}


\newcommand{\deuh}{\dot{e}_{\bs{u}}^h}

\newcommand{\depth}{\dot{e}_{\pt}^h}
\newcommand{\depti}{\dot{e}_{\pt}^I}

\newcommand{\depfi}{\dot{e}_{\pf}^I}
\newcommand{\depfh}{\dot{e}_{\pf}^h}

\newcommand{\dub}{\dot{\bs{u}}}
\newcommand{\dubh}{\dot{\ub}_h}
\newcommand{\dpt}{\dot{p}_{t}}
\newcommand{\dpth}{\dot{p}_{t,h}}
\newcommand{\dpf}{\dot{p}_{p}}
\newcommand{\dpfh}{\dot{p}_{p,h}}
\newcommand{\ddt}{\frac{d}{dt}}

\newcommand{\ept}{e_{\pt}}
\newcommand{\epti}{e_{\pt}^I}
\newcommand{\epth}{e_{\pt}^h}

\newcommand{\epf}{e_{\pf}}
\newcommand{\epfi}{e_{\pf}^I}
\newcommand{\epfh}{e_{\pf}^h}

\newcommand{\eu}{e_{\bs{u}}}
\newcommand{\eui}{e_{\bs{u}}^I}
\newcommand{\euh}{e_{\bs{u}}^h}

\newcommand{\fb}{\bs{f}}

\newcommand{\kapb}{\ul{\bs{\kappa}}}

\newcommand{\LtH}[3]{L^{#1}(0,t;H_{#2}^{#3})}
\newcommand{\LtL}[3]{L^{#1}(0,t;L_{#2}^{#3})}
\newcommand{\LtV}[2]{L^{#1}(0,t;#2)}

\newcommand{\pt}{p_{t}}
\newcommand{\pth}{p_{t,h}}

\newcommand{\pf}{p_{p}}
\newcommand{\pfh}{p_{p,h}}

\newcommand{\Qt}{Q_{t}}
\newcommand{\Qf}{Q_{p}}
\newcommand{\Qth}{Q_{t,h}}
\newcommand{\Qfh}{Q_{p,h}}
\newcommand{\qt}{q_{t}}
\newcommand{\qf}{q_{p}}

\newcommand{\ub}{\bs{u}}
\newcommand{\ubh}{\bs{u}_{h}}

\newcommand{\ul}{\underline}

\newcommand{\vb}{\bs{v}}
\newcommand{\Vb}{\bs{V}}
\newcommand{\Vbh}{\bs{V}_{h}}

\newcommand{\wb}{\bs{w}}

\newcommand{\Xh}{\mathcal{X}_h}

\begin{document}

\title[poroelasticity]{Analysis and preconditioning of parameter-robust finite element methods for Biot's consolidation model }
\author{Jeonghun J. Lee}
\maketitle

\begin{abstract}
In this paper we consider a three-field formulation of the Biot model which has the displacement, the total pressure, and the pore pressure as unknowns.
For parameter-robust stability analysis, we first show a priori estimates of the continuous problem with parameter-dependent norms. 
Then we study finite element discretizations which provide parameter-robust error estimates and preconditioners.
For finite element discretizations we consider standard mixed finite element as well as stabilized methods for the Stokes equations, 
and the complete error analysis of semidiscrete solutions is given. Abstract forms of parameter-robust preconditioners are investigated 
by the operator preconditioning approach.
The theoretical results are illustrated with numerical experiments.

\end{abstract}

\section{Introduction}

In poroelastic media saturated by fluids, the behaviors of porous medium and the saturating fluid flow are described by Biot's consolidation model \cite{MR0066874}. 
Poroelasticity models are widely used in geophysics and petrolium engineering applications, so development of finite element methods for the poroelastic models began more than four decades ago \cite{VermeerVerruijt1981,ZienkiewiczShiomi1984} and is still an active research area
\cite{MuradLoula1992,MuradLoula1994,MuradThomeeLoula1996,KorsaweStarke2005,PhillipsWheeler2007a,
PhillipsWheeler2007b,Yi2013,ChenLuoFeng2013,LeeEtAl2017,berger2015stabilized,NabilRiviere2018,bause2017space}. 

Poroelasticity models for practical applications have various different ranges of parameters.
For example, geophysics materials are compressible solids whereas most soft biological tissues are modelled as incompressible or nearly incompressible materials. 
It turns out that the different parameter ranges are intimately related to accuracy of numerical methods and construction of efficient iterative solvers. 
Therefore, one of main interests of numerical methods for the Biot model is robustness for model parameter ranges, 
and there are various recent studies for parameter-robust numerical methods \cite{Lee2016,Hong-Kraus,Feng-Ge-Li,Fu,Lee2018,OyarzuaRuizBaier2016} and efficient solvers \cite{hu2017nonconforming,rodrigo2017new,LeeEtAl2017,BaerlandLeeMardalWinther2017}.
Recently, a new three-field formulation for the Biot model was independently introduced in \cite{LeeEtAl2017} and \cite{OyarzuaRuizBaier2016} with different foci of interests. In \cite{LeeEtAl2017}, the main interest is construction of preconditioners robust for various parameters (large bulk and shear moduli, small hydraulic conductivity, and small time step sizes). In \cite{OyarzuaRuizBaier2016}, the main interest is optimal error estimates robust for large bulk modulus. 
The two main purposes of this work is to provide comprehensive a priori error analysis of time dependent solutions of the three-field formulation with extension to stabilized numerical methods. 
In \cite{OyarzuaRuizBaier2016}, stability of the static system is proved using compactness of a linear operator and error estimates are obtained with standard argument
but complete error analysis for time dependent problems was not given.
In contrast, we do not use the compactness argument because it is difficult to extend the error estimates to time dependent solutions. Instead, we utilize an improved energy-type estimates, and prove the a priori error estimates of time dependent solutions without using Gronwall inequality. 
We also consider stabilized methods in this paper and provide complete error analysis and an abstract form of parameter-robust preconditioners.


The paper is organized as follows. In Section~2, we introduce preliminary materials including notations, definitions, and the variational formulation of the Biot model. In Section~3, we discuss stability of the system and prove energy-type estimates of solutions. In Section~4, we discuss finite element discretizations and the a priori error estimates of semidiscrete solutions.
In Section~5, we prove stability of static system with respect to parameter-dependent norms and propose abstract forms of parameter-robust preconditioners. Finally, we present numerical results illustrating convergence of erros and parameter-robust performances of preconditioners in Section~6.

\section{Preliminaries} \label{sec:prelim}

\subsection{Notations}
Let $\Omega$ be a bounded polygonal domain with Lipschitz continuous boundary in $\R^n$ with $n=2$ or $3$. 

For a nonnegative integer $m$, $H^m (\Omega)$, $H^m(\Omega; \R^n)$ denote the standard $\R$ and $\R^n$-valued Sobolev spaces based on $L^2$ norm. 
For a Banach space $\mc{X}$ and $(a, b) \subset \R$, $C^0 (a, b; \mc{X})$ denotes the set of functions $f : (a, b) \ra \mc{X}$ which are continuous in $t \in (a,b)$. For an integer $m \geq 1$ we define 
\begin{align*}
C^m (a, b ; \mc{X}) = \{ f \, | \, \pd^{i}f/\pd t^{i} \in C^0(a, b\,;\mc{X}), \, 0 \leq i \leq m \},
\end{align*}
where $\pd f/\pd t$ is the time derivative in the sense of the Fr\'echet derivative in $\mc{X}$ (see e.g., \cite{Yosida-book}). 
We also define the space-time norm 
\begin{align*}
\| f \|_{L^p(a,b; \mc{X})} = 
\begin{cases}
\left( \int_a^b \| f \|_{\mc{X}}^p ds \right)^{1/p}, \quad 1 \leq p < \infty, \\
\esssup_{t \in (a,b)} \| f \|_{\mc{X}}, \quad p = \infty.
\end{cases}
\end{align*}
If a time interval $J$ is clear in context, then we use $L^p \mc{X}$ to denote $L^p(J; \mc{X})$ for simplicity. 
We define the space-time Sobolev spaces $W^{k,p}(J; \mc{X})$ for nonnegative integer $k$ and $1 \leq p \leq \infty$ as the closure of $C^k (J; \mc{X})$ with the norm $\| f \|_{W^{k,p} \mc{X}} = \sum_{i=0}^k \| \pd^i f / \pd t^i \|_{L^p \mc{X}}$. 
For simplicity we adopt the convention $\| f, g \|_\mc{X} = \| f \|_\mc{X} + \| g \|_\mc{X}$, and $\dot{u}$ is used to denote the time derivative of $f$. 

For a triangulation of $\Omega$, $\calT_h$ is used to denote a shape-regular triangulation for which $h$ is the maximum diameter of triangles (or tetrahedra) and $\mathcal{E}_h$ is the corresponding set of edges (faces), respectively. For $E \in \mathcal{E}_h$ and functions $\bs{f}, \bs{g} : \mathcal{E}_h \ra \R^n$ we define
\begin{align*}
\langle \bs{f}, \bs{g} \rangle_E = \int_E \bs{f} \cdot \bs{g} \,ds, \qquad \langle \bs{f}, \bs{g} \rangle = \sum_{E \in \mathcal{E}_h} \langle \bs{f}, \bs{g} \rangle_E. 
\end{align*}
For an integer $k \geq 0$ 
and for each $T \in \mc{T}_h$, $\mc{P}_k(T)$ is the space of polynomials of degree $\le k$ on $T$, and $\mc{P}_k(\mc{T}_h)$ denotes the space 
\algns{
\mc{P}_k(\mc{T}_h) = 
\case{ 
\LRc{q \in H^1(\Omega) \;:\; q|_T \in \mc{P}_k(T), \; T \in \mc{T}_h } \quad \text{if } k \ge 1 \\
\LRc{q \in L^2(\Omega) \;:\; q|_T \in \mc{P}_k(T), \; T \in \mc{T}_h } \quad \text{if } k = 0  \\
} .
}
For a vector space $\X$, we use $\mathcal{P}_k(G; \X)$ and $\mathcal{P}_k(\mathcal{T}_h; \X)$ to denote the space of $\X$-valued polynomials with same conditions. 

\subsection{The Biot's consolidation model}
Throughout this paper we restrict our interest on quasistatic consolidation problems and the acceleration term is ignored.
In our description of the model, $\bs{u}$ is the displacement of porous media, $p$ is the pore pressure, $\bs{f}$ is the body force, $g$ is the mass change rate of fluid. The governing equations of Biot's consolidation model with an isotropic elastic porous medium are 
\subeqns{eq:strong-eq}{
\label{eq:strong-eq1}-\div \LRp{ 2 \mu \e(\bs{u}) + (\lambda \div \ub - \alpha p) \I } &= \bs{f}, \\
\label{eq:strong-eq2} s_0 \dot{p} + \alpha \div \dot{\bs{u}} - \div (\ul{\bs{\kappa}} \nabla p) &= g, 
}
where $\mu$ and $\lambda$ are the Lam\'e coefficients, $s_0 \geq 0 $ is the constrained specific storage coefficient, $\underline{\bs{\kappa}}$ is the hydraulic conductivity tensor, $\alpha>0$ is the Biot--Willis constant which is close to 1, and $\Bbb{I}$ is the identity matrix. 
We assume that $\mu$ is uniformly bounded above and below with positive constants. We assume $\lambda$ has a uniformly positive lower bound but $\lambda$ may not have a uniform upper bound and $\lambda = +\infty$ corresponds to the incompressibility of the solid matrix. 
We assume that there are constants $c_0, c_1$ such that 
\algns{
0 \le c_0 \le s_0 (x) \le c_1 , \qquad x \in \Omega.
}
We remark that $s_0$ is related to $\alpha$, the porosity $\phi$, and the bulk moduli of the solid and fluid. 
Under the assumption that $\phi$ is uniform with $0 < \phi < \alpha$, if the solid is not incompressible, then $s_0 \ge C/\lambda$ holds with a constant $C$ of scale 1. However, $s_0$ may vanish on a subdomain if $\lambda = + \infty$ on the subdomain and the fluid is incompressible.
The hydraulic conductivity tensor $\ul{\bs{\kappa}} = \ul{\bs{\kappa}}(x)$ is positive definite with uniform lower and upper bounds $\kappa_0, \kappa_1 >0$, i.e., 
\algns{
\kappa_{0} | \xi |^2 \le \xi^T \ul{\bs{\kappa}}(x) \xi  \le \kappa_{1} | \xi |^2 , \qquad \forall \;0 \not = \xi \in \R^n,\quad  \text{a.e.} \; x \in \Omega .
}
On details of deriving these equations from physical modelling, we refer to standard porous media texts, e.g., \cite{anandarajah2010computational}. 

For well-posedness of the problem, the equations \eqref{eq:strong-eq} need appropriate boundary and initial conditions. We assume that there are partitions of $\pd \Omega$ which are 
\begin{align*}
\pd \Omega = \Gamma_p \cup \Gamma_f, \qquad \pd \Omega = \Gamma_d \cup \Gamma_t, \qquad | \Gamma_d |, |\Gamma_p| > 0
\end{align*}
where $| \Gamma |$ is the $(n-1)$-dimensional Lebesgue measure of $\Gamma$. We also assume that boundary conditions are given as
\begin{align}
\label{eq:bc}  p(t) = 0 \text{ on } \Gamma_p, \quad - \ul{\bs{\kappa}} \nabla p(t) \cdot \bs{n} = 0 \text{ on } \Gamma_f, \quad \bs{u}(t) = 0 \text{ on } \Gamma_d, \quad \underline{\bs{\sigma}}(t) \bs{n} = 0 \text{ on } \Gamma_t,
\end{align}
for all $t \in (0, T]$ where $\bs{n}$ is the outward unit normal vector field on $\pd \Omega$ and $\underline{\bs{\sigma}} := 2 \mu \e(\bs{u}) + (\lambda \div \ub - \alpha p) \I$, the Cauchy stress tensor.
Here we only consider the homogeneous boundary condition for simplicity but our method can be easily extended to problems with nonhomogeneous boundary conditions. We also assume that given initial data $p(0), \bs{u}(0)$ and $\bs{f}(0)$ satisfy the compatibility condition \eqref{eq:strong-eq1}.
Well-posedness of this system under these assumptions can be found in \cite{MR1790411}. 



\subsection{The formulation with the displacement, total and pore pressures}
In \cite{LeeEtAl2017}, a formulation of the Biot model with three unknowns was introduced in order to obtain finite element discretizations of the Biot model with parameter-robust preconditioning. 
Introduction of a new unknown $\pt := \lambda \div \bs{u} - \alpha \pf$, which will be called total pressure, gives an additional equation $\div \ub  - \lambda^{-1} (\pt + \alpha \pf)= 0$.
Therefore, we consider a system 
\subeqns{eq:upp-eq}{
\label{eq:upp-eq1} - \div \LRp{ 2 \mu \e(\bs{u}) } - \nabla \pt &= \bs{f}, \\
\label{eq:upp-eq2} \div \ub  - \lambda^{-1} (\pt + \alpha \pf)   &= 0, \\
\label{eq:upp-eq3} - \alpha \lambda^{-1} \dpt - \LRp{s_0 + \alpha^2 \lambda^{-1} } \dpf + \div ( \kapb \nabla \pf) &= -g .
}
Let us define function spaces 
\algns{ 
\Vb = \LRc{ \vb \in {H}^1 (\Omega; \R^n) \;:\; \vb|_{\Gamma_d} = 0 }, \quad 
\Qt = L^2(\Omega), \quad 
\Qf = \{ q \in H^1(\Omega) \;:\; q|_{\Gamma_p} = 0 \} ,
}
and consider the following variational form of \eqref{eq:upp-eq}: 

({\bf VP}) For initial data $(\ub(0), \pt(0), \pf(0)) \in \Vb \times \Qt \times \Qf$ satisfying 
\subeqns{eq:comp-cond}{ 
\label{eq:comp-cond-1} \LRp{2 \mu \e(\bs{u} (0)), \e(\bs{v}) } + \LRp{ \pt (0), \div \bs{v} } &= \LRp{ \bs{f} (0), \bs{v} } & & \forall  \vb \in \Vb, \\
\label{eq:comp-cond-2} \div \ub (0) - \lambda^{-1} (\pt (0) + \alpha \pf (0))   &= 0 ,
}
find 
$(\ub, \pt, \pf) \in C^1(0,T; \Vb) \times C^1(0,T; \Qt) \times C^1(0,T; \Qf)$
such that 
\subeqns{eq:weak-upp-eq}{
\label{eq:weak-upp-eq1} \LRp{2 \mu \e(\bs{u}), \e(\bs{v}) } + \LRp{ \pt , \div \bs{v} } &= \LRp{ \bs{f}, \bs{v} } & & \forall \bs{v} \in \bs{V}, \\
\label{eq:weak-upp-eq2} \LRp{ \div \ub , \qt } - \LRp{\lambda^{-1} \pt , \qt } - \LRp{ \alpha \lambda^{-1} \pf , \qt }   &= 0 & & \forall \qt \in \Qt , \\
\label{eq:weak-upp-eq3} - \LRp{ \alpha \lambda^{-1}  \dpt, \qf } - \LRp{ \LRp{s_0 + \alpha^2 \lambda^{-1} } \dpf , \qf } - \LRp{ \kapb \nabla \pf , \nabla \qf }  &= -\LRp{ g, \qf } & & \forall \qf \in \Qf .
}

\section{Energy estimates and stability}
In this section, we discuss stability of the system \eqref{eq:weak-upp-eq} with parameter-dependent norms. 
The streamline of this stability analysis will also lead to the a priori error analysis in the next section.

Let us first define parameter dependent norms 
\algns{
\nor{\vb}_{\Vb} = \LRp{2\mu \e(\ub), \e(\ub)}^{\half}, \quad \nor{\qf}_{1,\kappa} = \LRp{\kapb \nabla \qf, \nabla \qf}^{\half}, \quad \norw{\qt}{\Qt} = ((2\mu)^{-1} \qt, \qt)^{\half},
}
and for a nonnegative function (or a positive semidefinite tensor) $w$, $\nor{q }_{0,w}$ denotes $\nor{q }_{0,w} = \LRp{w q, q}^{\half}$.
We will use $\Vb'$ and $\Qt'$ to denote the dual spaces of $\Vb$ and $\Qt$, respectively. 
We also use $H^{-1}$ to denote the dual space of $H_{\Gamma_p}^1$ with the norms
\algns{
\norw{q}{-1} = \sup_{p \in H_{\Gamma_p}^1} \frac{\LRp{p, q}}{\norw{\nabla p}{0}} .
}

\begin{theorem}
Assume that 
$\fb \in W^{1,2}(0,T; \Vb')$, $g \in L^2(0,T; L^2) \cap W^{1,1}(0,T; H^{-1})$, and initial data $(\ub(0), \pt(0), \pf(0))$ satisfying \eqref{eq:comp-cond} are given. If $(\ub, \pt, \pf)$ is a solution of \eqref{eq:weak-upp-eq}, then 
\algn{
\label{eq:estm_1} &\nor{\ub}_{L^{\infty}(0,t; \Vb)} + \nor{\pt - \alpha \pf}_{L^\infty(0,t; L_{\lambda^{-1}}^2)} + \nor{\pf}_{L^\infty(0,t; L_{s_0}^2)}
+ \nor{\pf}_{L^2(0,t; H_{\kappa}^1)} \\
&\quad \lesssim \nor{\ub(0)}_{\Vb} + \nor{\pt(0) - \alpha \pf(0)}_{0,\lambda^{-1}} + \nor{\pf(0)}_{0,s_0}  +  \nor{\fb}_{W^{1,1}(0,t; \Vb')} \notag  \\ 
&\qquad + \min \LRc{{c_0}^{-\half} \nor{g}_{\LtL{1}{}{2}} , 2 \kappa_0^{-\half} \nor{g}_{\LtH{2}{}{-1}} } , \notag \\
\label{eq:estm_2} &\nor{\dot{\ub}}_{L^{2}(0,t; \Vb)} + \nor{\dot{\pt} - \alpha \dot{\pf}}_{L^2(0,t; L_{\lambda^{-1}}^2)} + \nor{\dot{\pf}}_{L^2(0,t; L_{s_0}^2)} + \nor{\pf}_{L^\infty(0,t; H_{\kappa}^1)} \\
\notag &\quad \lesssim \nor{\pf(0)}_{1,\kappa} + \nor{\dot{\fb}}_{L^2(0,t; \Vb')} \\
\notag & \qquad + \min \{ c_0^{-\half} \nor{g}_{L^2(0,t; L^2)}, \kappa_0^{-\half} \nor{g}_{W^{1,1}(0,t; H^{-1})} \} \\
\label{eq:estm_3} &\nor{\pt}_{\LtV{\infty}{\Qt}} \le C_0 ( \nor{\ub}_{L^\infty(0,t; \Vb)} + \nor{\fb}_{L^\infty(0,t; \Vb')} ) 
} 
with $C_0$ depending on $\Omega$ and $\mu$. The constants in \eqref{eq:estm_1} and \eqref{eq:estm_2} are independent of $\Omega$ and parameters.

\end{theorem}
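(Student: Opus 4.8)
The plan is to establish all three bounds by energy identities obtained from testing \eqref{eq:weak-upp-eq} against the (possibly time‑differentiated) solution, with the coupling terms assembled into exact time derivatives; the forcing is then controlled \emph{without} Gronwall by integrating by parts in time and absorbing solution norms into the left‑hand side. For \eqref{eq:estm_1} I would test \eqref{eq:weak-upp-eq1} with $\vb=\dub$, use the time‑differentiated \eqref{eq:weak-upp-eq2} with $\qt=\pt$ to replace the coupling term $\LRp{\pt,\div\dub}$, and subtract \eqref{eq:weak-upp-eq3} tested with $\qf=\pf$. The one computation to watch is that the surviving pressure terms $\LRp{\lambda^{-1}\dpt,\pt}+\alpha\LRp{\lambda^{-1}\dpf,\pt}+\alpha\LRp{\lambda^{-1}\dpt,\pf}+\alpha^2\LRp{\lambda^{-1}\dpf,\pf}+\LRp{s_0\dpf,\pf}$ collapse to $\half\ddt\LRp{\nor{\pt+\alpha\pf}_{0,\lambda^{-1}}^2+\nor{\pf}_{0,s_0}^2}$, the symmetric cross term being exactly $\alpha\ddt\LRp{\lambda^{-1}\pt,\pf}$ (so the naturally controlled combination is $\pt+\alpha\pf=\lambda\div\ub$). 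This yields
\[
\half\ddt\LRp{\nor{\ub}_{\Vb}^2+\nor{\pt+\alpha\pf}_{0,\lambda^{-1}}^2+\nor{\pf}_{0,s_0}^2}+\nor{\pf}_{1,\kappa}^2=\LRp{\fb,\dub}+\LRp{g,\pf}.
\]
Integrating over $(0,s)$ and taking the supremum gives the left side of \eqref{eq:estm_1}. The term $\int_0^s\LRp{\fb,\dub}$ is integrated by parts in time and absorbed into $\nor{\ub}_{L^\infty\Vb}$ using $\nor{\fb}_{L^\infty\Vb'}\lesssim\nor{\fb}_{W^{1,1}\Vb'}$ and Young's inequality; the $\min$ arises from the two estimates of $\int_0^s\LRp{g,\pf}$, namely $c_0^{-\half}\nor{g}_{L^1L^2}\nor{\pf}_{L^\infty L^2}$ via $s_0\ge c_0$, or the duality bound $\kappa_0^{-\half}\nor{g}_{L^2H^{-1}}\nor{\pf}_{L^2H_\kappa^1}$ absorbed into the dissipation.

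For \eqref{eq:estm_2} I would differentiate \eqref{eq:weak-upp-eq1}--\eqref{eq:weak-upp-eq2} in time, test them with $\vb=\dub$ and $\qt=\dpt$, and subtract \eqref{eq:weak-upp-eq3} tested with $\qf=\dpf$. Now $\LRp{\kapb\nabla\pf,\nabla\dpf}=\half\ddt\nor{\pf}_{1,\kappa}^2$ is the only genuine time derivative while the elliptic terms become the dissipated energy, giving
\[
\nor{\dub}_{\Vb}^2+\nor{\dpt+\alpha\dpf}_{0,\lambda^{-1}}^2+\nor{\dpf}_{0,s_0}^2+\half\ddt\nor{\pf}_{1,\kappa}^2=\LRp{\dot{\fb},\dub}+\LRp{g,\dpf}.
\]
Here the initial data enters only through $\nor{\pf(0)}_{1,\kappa}$; the term $\int_0^s\LRp{\dot{\fb},\dub}$ is bounded directly by $\nor{\dot{\fb}}_{L^2\Vb'}\nor{\dub}_{L^2\Vb}$ and absorbed. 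The $\min$ again reflects two estimates of $\int_0^s\LRp{g,\dpf}$: either $c_0^{-\half}\nor{g}_{L^2L^2}$ using $s_0\ge c_0$, or, to remain valid as $c_0\to0$, the integration by parts $\int_0^s\LRp{g,\dpf}=\LRp{g,\pf}\big\vert_0^s-\int_0^s\LRp{\dot{g},\pf}$ followed by $H^{-1}$ duality, producing $\kappa_0^{-\half}\nor{g}_{W^{1,1}H^{-1}}$ to be absorbed into $\nor{\pf}_{L^\infty H_\kappa^1}$.

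Finally, \eqref{eq:estm_3} is not an energy bound but an inf--sup argument: from \eqref{eq:weak-upp-eq1}, $\LRp{\pt,\div\vb}=\LRp{\fb,\vb}-\LRp{2\mu\e(\ub),\e(\vb)}$, so by the standard inf--sup stability of the divergence on $\Vb\times\Qt$ together with Cauchy--Schwarz in the $\mu$‑weighted inner products, $\nor{\pt}_{\Qt}\lesssim\sup_{\vb\in\Vb}\LRp{\pt,\div\vb}/\nor{\vb}_{\Vb}\le\nor{\fb}_{\Vb'}+\nor{\ub}_{\Vb}$ pointwise in $t$, and taking the supremum in $t$ gives \eqref{eq:estm_3} with $C_0$ carrying the inf--sup constant and the $\mu$‑dependence of the weighted norms. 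The main obstacle throughout is the Gronwall‑free treatment of the forcing: one must take the supremum in time on the left \emph{before} invoking Young's inequality, and the $H^{-1}$‑duality estimates of the $g$‑terms---which are what make the constants robust as $c_0\to0$---must be absorbed into the dissipation and the $L^\infty$ energy norms rather than into a time integral that would necessitate Gronwall.
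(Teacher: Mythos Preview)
Your strategy is correct and matches the paper's proof essentially line for line: the same test functions (up to an overall sign convention), the same energy identities, the same integration by parts in time on $(\fb,\dub)$ combined with the ``take supremum before Young'' maneuver to avoid Gronwall, the same dichotomy for the $g$-term (either $s_0\ge c_0$ or $H^{-1}$ duality absorbed into dissipation/$L^\infty H^1_\kappa$), and the same inf--sup argument for \eqref{eq:estm_3}. One remark: your computation giving $\pt+\alpha\pf$ (i.e.\ $\lambda\div\ub$) is in fact what drops out of the equations as written; the combination $\pt-\alpha\pf$ appearing in the theorem statement and in the paper's displayed energy identity is a sign slip there, not in your argument.
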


\begin{proof}
We first prove \eqref{eq:estm_1}.
Taking $\vb = \dub$ in \eqref{eq:weak-upp-eq1}, $\qt = - \pt$ in the time differentiation of \eqref{eq:weak-upp-eq2}, $\qf = -\pf$ in \eqref{eq:weak-upp-eq3}, and adding the three equations altogether, we have 
\algn{ \label{eq:energy-eq}
\half \ddt \LRp{ \nor{\ub}_{\Vb}^2 + \nor{\pt - \alpha \pf}_{0,\lambda^{-1}}^2 + \nor{\pf}_{0,s_0}^2 } + \nor{\pf}_{1,\kappa}^2 = (\fb, \dub) + (g, \pf) .
}

Let us define $X(s) \ge 0$ and $Y(s) \ge 0$ for $s \ge 0$ as 
\algns{
X(s)^2 &= \nor{\ub(s)}_{\Vb}^2 + \nor{\pt(s) - \alpha \pf(s)}_{0,\lambda^{-1}}^2 + \nor{\pf(s)}_{0,s_0}^2 , \\
Y(s)^2 &= \int_0^s \nor{\pf(r)}_{1,\kappa}^2 dr .
}
Then integration of \eqref{eq:energy-eq} from 0 to $t$ gives
\algns{
\half(X(t)^2 - X(0)^2) + Y(t)^2  = \int_0^t \LRs{(\fb(s), \dub(s)) + (g(s),\pf(s)) } \,ds .
}
By the integration by parts in time, 
\algns{
\int_0^t \LRs{(\fb(s), \dub(s)) }\,ds = (\fb(t), \ub(t)) - (\fb(0), \ub(0)) - \int_0^t (\dot{\fb}(s), \ub(s)) \,ds ,
}
therefore we have 
\algn{ 
\notag &\half(X(t)^2 - X(0)^2) + Y(t)^2 \\
\label{eq:int-ineq1} &\quad = (\fb(t), \ub(t)) - (\fb(0), \ub(0)) + \int_0^t \LRs{(- \dot{\fb}(s), \ub(s)) + (g(s),\pf(s)) } \,ds \\
\notag &\quad \le (\nor{\fb}_{L^\infty(0,t; \Vb')} + \|{\dot{\fb}}\|_{L^1(0,t; \Vb')} ) \nor{\ub}_{L^\infty(0,t;\Vb)} \\
\notag &\qquad + \min\{ c_0^{-\half} \nor{g}_{L^1(0,t; L^2)} \nor{\pf}_{L^\infty(0,t; L_{s_0}^2)}, \kappa_0^{-\half} \nor{g}_{L^2(0,t; H^{-1})} Y(t) \} .
}

To prove \eqref{eq:estm_1} for 
\algns{
\nor{\ub}_{L^{\infty}(0,t; \Vb)} + \nor{\pt - \alpha \pf}_{L^\infty(0,t; L_{\lambda^{-1}}^2)} + \nor{\pf}_{L^\infty(0,t; L_{s_0}^2)}, 
}
note that it suffices to show the estimate for $t \in (0,T]$ such that $X(t) = \max_{s \in (0,t]} X(s)$, so we assume this maximality condition of $X(t)$. 
From the above inequality we can derive 
\algns{
&X(t)^2 + 2 Y(t)^2 \\
&\quad \le X(0)^2 + 2 \LRp{ ( \nor{\fb}_{L^\infty(0,t; \Vb')} + \|{\dot{\fb}}\|_{L^1(0,t; \Vb')}) + c_0^{-\half} \nor{g}_{L^1(0,t; L^2)} } X(t) 
}
or 
\algns{
X(t)^2 + 2 Y(t)^2 &\le X(0)^2 + 2  ( \nor{\fb}_{L^\infty(0,t; \Vb')} + \|{\dot{\fb}}\|_{L^1(0,t; \Vb')} ) X(t) \\
& \quad + 2 \kappa_0^{-\half} \nor{g}_{L^2(0,t; H^{-1})} Y(t) .
}
Applying Young's inequality, we can obtain either
\algns{
X(t)^2 
&\le 2 X(0)^2 + 4 \LRp{ ( \nor{\fb}_{L^\infty(0,t; \Vb')} + \|{\dot{\fb}}\|_{L^1(0,t; \Vb')}) + {c}_0^{-\half} \nor{g}_{L^1(0,t; L^2)} }^2 
}
or 
\algns{
X(t)^2 
\le 2 X(0)^2 + 4 ( \nor{\fb}_{L^\infty(0,t; \Vb')} + \|{\dot{\fb}}\|_{L^1(0,t; \Vb')} )^2 + 2 \kappa_0^{-1} \nor{g}_{\LtH{2}{}{-1}}^2 ,
}
thus 
\algn{ \label{eq:Xt-estm}
X(t) 
&\lesssim X(0) + ( \nor{\fb}_{L^\infty(0,t; \Vb')} + \|{\dot{\fb}}\|_{L^1(0,t; \Vb')} ) \\
\notag &\quad + \min \{ {c}_0^{-\half} \nor{g}_{L^1(0,t; L^2)},  \kappa_0^{-\half} \nor{{g}}_{L^2(0,t; H^{-1})} \} .
}
Note that $\nor{\ub}_{L^{\infty}(0,t; \Vb)}, \nor{\pt - \alpha \pf}_{L^\infty(0,t; L_{\lambda^{-1}}^2)} , \nor{\pf}_{L^\infty(0,t; L_{s_0}^2)} \le X(t)$ due to the maximality of $X(t)$. Then \eqref{eq:estm_1} for 
\algns{
\nor{\ub}_{L^{\infty}(0,t; \Vb)} + \nor{\pt - \alpha \pf}_{L^\infty(0,t; L_{\lambda^{-1}}^2)} + \nor{\pf}_{L^\infty(0,t; L_{s_0}^2)}
}
follows from the above inequality. We remark that this estimate can be extended to all $t \in (0, T]$, and we will use this estimate for general $t$ below.

To complete the proof of \eqref{eq:estm_1}, we need to estimate $Y(t)$ without the assumption $X(t) = \max_{s \in (0,t]} X(s)$. From \eqref{eq:int-ineq1} we get
\algns{
Y(t)^2 \le \half X(0)^2 + \LRp{ ( \nor{\fb}_{L^\infty(0,t; \Vb')} + \|{\dot{\fb}}\|_{L^1(0,t; \Vb')}) + c_0^{-\half} \nor{g}_{\LtL{1}{}{2}} } X(\bar{t})
}
or
\algns{
\half Y(t)^2 \le \half X(0)^2 + (\nor{\fb}_{L^\infty(0,t; \Vb')} + \|{\dot{\fb}}\|_{L^1(0,t; \Vb')}) X(\bar{t}) + \half \kappa_0^{-\half} \nor{g}_{\LtH{2}{}{-1}}
}
where $X(\bar{t}) = \max_{s \in [0,t]} X(s)$. Combining these with \eqref{eq:Xt-estm}, the proof of \eqref{eq:estm_1} is completed.


We now prove \eqref{eq:estm_2}. 
For this, we take $\qf = - \dpf$ in \eqref{eq:weak-upp-eq3}, $\vb = \dub$ in the time derivative of \eqref{eq:weak-upp-eq1}, $\qt = - \dpt$ in the time derivative of \eqref{eq:weak-upp-eq2}, and add all the equations together. Then we have 
\algn{ \label{eq:d-energy-eq}
&\nor{\dub(t)}_{\Vb}^2 + \nor{\dpt(t) - \alpha \dpf(t)}_{0,\lambda^{-1}}^2 + \nor{\dpf(t)}_{0,s_0}^2 + \half \ddt \nor{\pf(t)}_{1,\kappa}^2 \\
\notag &\quad = (\dot{\fb}(t), \dub(t)) + (g(t), \dpf(t)) .
}
If $s_0$ is non-degenerate with $s_0 \ge c_0 > 0$, by Young's inequality, 
\algns{
&\half \nor{\dub(t)}_{\Vb}^2 + \nor{\dpt(t) - \alpha \dpf(t)}_{0,\lambda^{-1}}^2 + \half \nor{\dpf(t)}_{0,s_0}^2 + \half \ddt \nor{\pf(t)}_{1,\kappa}^2 \\
&\quad \le \half \nor{\dot{\fb}(t)}_{\Vb'}^2 + \half c_0^{-1} \nor{g(t)}_0^2 .
}
Integrating this from 0 to $t$ gives
\mltln{ \label{eq:stab-energy-estm1}
\nor{\pf(t)}_{1,\kappa}^2 + \int_0^t \LRs{\nor{\dub(s)}_{\Vb}^2 + 2\nor{\dpt(s) - \alpha \dpf(s)}_{0,\lambda^{-1}}^2 + \nor{\dpf(s)}_{0,s_0}^2 } \,ds \\
\quad \le \nor{\pf(0)}_{1,\kappa}^2 + \int_0^t \LRs{ \nor{\dot{\fb}(s)}_{\Vb'}^2 + c_0^{-1} \nor{g(s)}_0^2 } \,ds .
}
When $s_0$ is degenerate, we integrate \eqref{eq:d-energy-eq} from 0 to $t$ and get 
\algn{
\label{eq:aux_estm} &\int_0^t \LRs{\nor{\dub(t)}_{\Vb}^2 + \nor{\dpt(t) - \alpha \dpf(t)}_{0,\lambda^{-1}}^2 + \nor{\dpf(t)}_{0,s_0}^2 } ds + \half \nor{\pf(t)}_{1,\kappa}^2 \\
\notag &\quad = \half \nor{\pf(0)}_{1,\kappa}^2 + \int_0^t \LRs{(\dot{\fb}(s), \dub(s)) + (g(s), \dpf(s))} \,ds \\
\notag &\quad = \half \nor{\pf(0)}_{1,\kappa}^2 + \int_0^t \LRs{(\dot{\fb}(s), \dub(s)) - (\dot{g}(s), \pf(s))} \,ds + (g(t), \pf(t)) - (g(0), \pf(0)) .
}
Since $\nor{\pf(t)}_{1,\kappa} \le \nor{\pf}_{L^\infty(0,t; H_{\kappa}^1)}$, 
without loss of generality, we may assume that $\nor{\pf(t)}_{1,\kappa} = \nor{\pf}_{L^\infty(0,t; H_{\kappa}^1)}$. Then the above formula gives 
\algns{
&\int_0^t \LRs{\nor{\dub(t)}_{\Vb}^2 + \nor{\dpt(t) - \alpha \dpf(t)}_{0,\lambda^{-1}}^2 + \nor{\dpf(t)}_{0,s_0}^2 } ds + \half \nor{\pf(t)}_{1,\kappa}^2 \\
&\quad \le \half \nor{\pf(0)}_{1,\kappa}^2 + \nor{\dot{\fb}}_{L^2(0,t; \Vb')} \nor{\dub}_{L^2(0,t; \Vb)} \\
&\qquad + \kappa_0^{-\half} \LRp{\nor{\dot{g}}_{L^1(0,t; H^{-1})} + \nor{g}_{L^\infty(0,t;H^{-1})} } \nor{\pf}_{L^\infty(0,t; H_{\kappa}^1)} .
}
By Young's inequality, we have 
\mltln{ \label{eq:stab-energy-estm2}
\int_0^t \LRs{\nor{\dub(t)}_{\Vb}^2 + 2\nor{\dpt(s) - \alpha \dpf(s)}_{0,\lambda^{-1}}^2 + 2\nor{\dpf(s)}_{0,s_0}^2 } ds + \half \nor{\pf(t)}_{1,\kappa}^2 \\
\quad \le  \nor{\pf(0)}_{1,\kappa}^2 + \nor{\dot{\fb}}_{L^2(0,t; \Vb')}^2 + 2\kappa_0^{-1} \LRp{\nor{\dot{g}}_{L^1(0,t; H^{-1})} + \nor{g}_{L^\infty(0,t;H^{-1})} }^2 .
}
Combining \eqref{eq:stab-energy-estm1} and \eqref{eq:stab-energy-estm2}, we have 
\algns{
\nor{\pf(t)}_{1,\kappa}  
&\lesssim \nor{\pf(0)}_{1,\kappa} + \nor{\dot{\fb}}_{L^2(0,t; \Vb')} \\
&\quad + \min \{ c_0^{-\half} \nor{g}_{L^2(0,t; L^2)}, \kappa_0^{-\half} (\nor{\dot{g}}_{\LtH{1}{}{-1}} + \nor{g}_{\LtH{\infty}{}{-1}}) \} ,
}
so \eqref{eq:estm_2} for $\nor{\pf}_{L^\infty(0,t; H_{\kappa}^1)}$ is proved. 
We can also estimate 
$$\int_0^t \left[{\nor{\dub(s)}_{\Vb}^2 + \nor{\dpt(s) - \alpha \dpf(s)}_{0,\lambda^{-1}}^2 + \nor{\dpf(s)}_{0,s_0}^2 } \right] ds$$ 
from \eqref{eq:stab-energy-estm1} and \eqref{eq:stab-energy-estm2} with the estimate of $\nor{\pf}_{L^\infty(0,t; H_{\kappa}^1)}$. The argument is completely analogous to the estimate of $\nor{\pf}_{L^2(0,t; H_{\kappa}^1)}$, so we omit details.

Finally, we prove \eqref{eq:estm_3}. From the inf-sup condition 
\algns{
\inf_{0 \not = \qt \in \Qt} \sup_{0 \not = \vb \in \Vb} \frac{(\div \vb, \qt)}{\nor{\e(\vb)}_{0} \nor{\qt}_{0}} \ge C,
}
for any given $\qt$, there exists $\vb \in \Vb$ such that $(\div \vb, \qt') = (\qt, \qt')$ for all $\qt' \in \Qt$,  and $\nor{\e(\vb)}_0 \le C_{\Omega} \nor{\qt}_0$ with $C_{\Omega}$ depending only on $\Omega$. 
If we take $\vb$ as such an element in $\Vb$ with $\qt = \frac{1}{2\mu} \pt$, then we can check that 
\algns{
\nor{\vb}_{\Vb} \le \sqrt{2 \mu_1} \nor{\vb}_1 \le C_{\Omega} \sqrt{\frac{\mu_1}{\mu_0}} \nor{\pt}_{\Qt}
}
with $\mu_1 := \nor{\mu}_{L^\infty}$, $\frac{1}{\mu_0} := \nor{\frac{1}{\mu}}_{L^\infty}$. 
From the estimate of $\nor{\ub}_{L^{\infty}(0,t; \Vb)}$ and \eqref{eq:weak-upp-eq1}, 
\algns{
\nor{\pt}_{\Qt}^2 = - (2\mu \e(\ub), \e(\vb)) + (\fb, \vb) \le C_{\Omega} \sqrt{ \frac{\mu_1}{\mu_0}} \LRp{ \nor{\ub}_{\Vb} + \nor{\fb}_{\Vb'} } \nor{\pt}_{\Qt} 
}
holds, and therefore  
\algn{ \label{eq:stab-pt-estm}
\nor{\pt}_{\LtV{\infty}{\Qt}} \le C_{\Omega} \sqrt{ \frac{\mu_1}{\mu_0}} ( \nor{\ub}_{L^\infty(0,t; \Vb)} + \nor{\fb}_{L^\infty(0,t; \Vb')} ) .
}
\end{proof}

\section{Discretization with finite elements}
In this section we discuss finite element discretization of \eqref{eq:weak-upp-eq} and the a priori error analysis of numerical solutions. 
We are interested in discretizations which are robust for the parameters including arbitrarily large $\lambda>0$, and only nonnegative $s_0 \ge 0$. 
Note that the limit case $\lambda = \infty$ decouples \eqref{eq:weak-upp-eq} into two separate problems, the Stokes equation and a time-dependent Darcy flow problems. 
Therefore, it is natural to combine two finite element methods, one for the Stokes equation for $(\ub, \pt)$ and the other for the Darcy flow problems for $\pf$. 

For discretizations of the Stokes equation, standard mixed methods with conforming finite elements are natural choices but stabilized methods for the Stokes equation are sometimes preferred due to their smaller number of degrees of freedom. Therefore we propose formulations covering some low order stabilized methods for discretization of $(\ub, \pt)$ with the a priori error analysis. The parameter $\mu$ is assumed to be 1 in the model problem of Stokes equations. However, $\mu$ is a function in $\Omega$ with large parameter value in most practical poroelasticity problems, so we assume that $1 \lesssim \mu_{\min} \le \mu \le \mu_{\max}$ and $\mu_{\max}/ \mu_{\min}$ is bounded above and below in $\Omega$. 

For discretizations of $\pf$, the standard method with Lagrange finite elements is the simplest numerical method but it does not give numerical solutions with local mass conservation. In this paper we use the enriched Galerkin method that we can obtain a locally mass conservative flux via local post-processing. However, our error analysis can be extended to any discretization methods of the Poisson equation including continuous and various discontinuous Galerkin methods.


\subsection{Finite element methods for the Stokes and Poisson equations}
In this subsection we introduce the mixed and stabilized methods for the Stokes equation of $(\ub, \pt)$ and the Lagrange finite elements for the Poisson equation of $\pf$. In this section we denote $\Vbh$, $\Qth$, $\Qfh$ the finite element spaces for the unknowns $\ub$, $\pt$, $\pf$, and assume that $\Vbh \subset \Vb$, $\Qth \subset \Qt$, and $\Qfh \subset \Qf$. We will use $k_{\ub}$, $k_{\pt}$, $k_{\pf}$ to denote the maximum polynomial approximation orders of $\Vbh$, $\Qth$, $\Qfh$ with the $L^2$ norm.

To describe the mixed and stabilized methods of $\Vbh$ and $\Qth$, let us consider an auxiliary problem to find $(\ub, p) \in \Vb \times \Qt$ such that 
\algn{ \label{eq:aux-stokes}
\LRp{2 \mu \e(\ub), \e(\vb) } + (\pt, \div \vb) = (\fb_1, \vb), \qquad 
(\div \ub, \qt) = (f_2, \qt) 
}
for all $(\vb, \qt) \in \Vb \times \Qt$. First, we can use stable mixed finite elements $(\Vbh, \Qth)$, i.e., the pair $(\Vbh, \Qth)$ satisfies the inf-sup condition
\algn{ \label{eq:mixed-inf-sup}
\inf_{0 \not = \qt \in \Qth} \sup_{0 \not = \vb \in \Vbh} \frac{(\div \vb, \qt)}{\nor{\nabla\vb}_{0} \nor{\qt}_{0}} \ge C >0
}
with a constant $C$ independent of $h$.
A similar inf-sup condition holds with denominator $\norw{\e(\vb)}{\Vb}\norw{\qt}{\Qt}$ by rescaling of norms, and the inf-sup constant depends on the constant of Korn's inequality and $\mu_{\max} / \mu_{\min}$.
For stabilized methods for \eqref{eq:aux-stokes}, we consider the stabilized methods of the form
\algns{
\mc{B}(\ubh, \pth; \vb, \qt) &:= (2 \mu \e(\ubh), \e(\vb)) + (\pth, \div \vb) + (\div \ubh, \qt) - s_h(\pth, \qt), \\ 
F(\vb, \qt) &:= (\fb_1, \vb) + (f_2, \qt) + \tilde{s}_h(\fb_1, \qt)
}
with some bilinear and linear forms $s_h$ and $\tilde{s}_h$ on $\Vbh \times \Qth$ such that 
\algns{
| s_h(\pt, \qt) | \lesssim \nor{\pt}_{\Qt} \nor{\qt}_{\Qt}. 
}
The discretization of \eqref{eq:aux-stokes} is to find $(\ubh, \pth) \in \Vbh \times \Qth$ such that 
\algn{ \label{eq:stab-stokes}
\mc{B}(\ubh, \pth; \vb, \qt) = F(\vb, \qt) \qquad (\vb, \qt) \in \Vbh \times \Qth .
}
We assume that this discretization is consistent (with sufficiently regular exact solutions) and also assume 
that an inf-sup condition 
\algn{ \label{eq:stab-inf-sup}
\inf_{(\ub, \pt) \in \Vbh \times \Qth} \sup_{(\vb, \qt) \in \Vbh \times \Qth} \frac{\mc{B}(\ub, \pt; \vb, \qt)}{ (\norw{\ub}{\Vb} + \norw{\pt}{\Qt}) (\norw{\vb}{\Vb} + \norw{\qt}{\Qt}) } \ge C > 0 
}
holds with $C$ independent of $h$ and parameters. 

We here remark that there are known stabilized methods satisfying \eqref{eq:stab-inf-sup}, for example, 
\begin{subequations}
\label{eq:stab-method1}
\algn{
\Vbh &= \mc{P}_1(\mc{T}_h; \R^n), & \Qth &= \mc{P}_0(\mc{T}_h), \\
\quad s_h (\pt, \qt) &= \frac{\gamma_2}{2\mu} \sum_{e \in \mc{E}_h} h_e^{-1} \LRa{\jump{\pt}, \jump{\qt}}_e , & \tilde{s}_h &= 0, 
}
\end{subequations}
with $\jump{\qt}$, the jump of $\qt$ on edges/faces (cf. \cite{KechkarSilvester1992}), and 
\begin{subequations}
\label{eq:stab-method2}
\algn{
\Vbh &= \mc{P}_1(\mc{T}_h ; \R^n), & \Qth &= \mc{P}_1(\mc{T}_h), \\
s_h(\pt, \qt) &= \frac{\gamma_2}{2\mu} \sum_{T \in \mc{T}_h} h_T^2 \LRp{\nabla \pt, \nabla \qt }_{T}, &
\tilde{s}_h(\fb, \qt) &= - \frac{\gamma_2}{2\mu} \sum_{T \in \mc{T}_h} h_T^2 (\fb, \nabla \qt)
}
\end{subequations}
where $\gamma_2 >0$ is a parameter depending on the shape regularity of meshes. These stabilization methods were proposed in \cite{BrezziPitkaranta1984} and \cite{KechkarSilvester1992}, respectively. 
For more on stabilized methods for the Stokes equation, we refer to \cite{FrancaHughesStenberg2008}.

For $\Qfh$ we use the standard Lagrange finite elements.

\subsection{Semidiscrete error analysis}
The semidiscrete formulation of \eqref{eq:weak-upp-eq} is to find 
$(\ubh, \pth, \pfh) \in C^1(0,T;\Vbh ) \times C^1(0,T; \Qth ) \times C^1(0,T; \Qfh )$
such that 
\subeqns{eq:weak-upp-semi}{
\label{eq:weak-upp-semi1} \LRp{2 \mu \e(\ubh ), \e(\vb ) } + \LRp{ \pth , \div \vb } &= (\fb, \vb), \\ 
\label{eq:weak-upp-semi2} \LRp{ \div \ubh , \qt } - s_h \LRp{\pth, \qt} - \LRp{ \lambda^{-1} \pth , \qt } - \LRp{\alpha \lambda^{-1} \pfh , \qt}   &= \tilde{s}_h(\fb, \qt), \\
\label{eq:weak-upp-semi3} - \LRp{ \alpha \lambda^{-1} \dpth , \qf } - \LRp{\LRp{s_0 + \alpha^2 \lambda^{-1} } \dpfh , \qf } - \LRp{\kapb \nabla \pfh , \nabla \qf }  &= \LRp{ g, \qf } 
}
for any $\vb \in \Vbh$, $\qt \in \Qth$, $\qf \in \Qfh$. 
It is obvious that $s_h = \tilde{s}_h=0$ if we use mixed methods for $(\ubh, \pt)$. 

Suppose that $(\ub, \pt, \pf)$ is an exact solution of \eqref{eq:weak-upp-eq} and $(\ubh, \pth, \pfh)$ is a numerical solution of \eqref{eq:weak-upp-semi}, and 
define 
\algns{
\eu(t) := \ub (t) - \ubh(t), \quad \ept(t) := \pt(t) - \pth(t), \quad \epf(t) := \pf(t) - \pfh(t) .
}
For some interpolations $(\Pi_h^{\Vb} \ub (t), \Pi_h^{\Qt} \pt(t), \Pi_h^{\Qf} \pf(t)) \in \Vbh \times \Qth \times \Qfh$, which will be defined below, we split the errors into two parts as
\algn{
\label{eq:u-split} 
\eu(t) &= \eui(t) + \euh(t) := (\bs{u}(t) - \Pi_h^{\Vb} \bs{u}(t)) + (\Pi_h^{\Vb} \bs{u}(t) - \bs{u}_h (t)), \\
\label{eq:pt-split} 
\ept(t) &= \epti(t) + \epth(t) := (\pt(t) - \Pi_h^{\Qt} \pt(t)) + (\Pi_h^{\Qt} \pt(t) - \pth(t) ), \\
\label{eq:pf-split} 
\epf(t) &= \epfi(t) + \epfh(t) := (\pf(t) - \Pi_h^{\Qf} \pf(t)) + (\Pi_h^{\Qf} \pf(t) - \pfh(t) ).
}
We define $\Pi_h^{\Vb} \ub (t)$ and $\Pi_h^{\Qt} \pt(t)$ as the solution of auxiliary problem: \\
{\bf (AP1)} Find $(\Pi_h^{\Vb} \ub (t), \Pi_h^{\Qt} \pt(t)) \in \Vbh \times \Qth$ such that 
\algns{ 
\LRp{2 \mu \e(\Pi_h^{\Vb} \ub (t)), \e(\vb ) } + \LRp{ \Pi_h^{\Qt} \pt(t) , \div \vb } &= (\fb(t), \vb), \\ 
\LRp{ \div \Pi_h^{\Vb} \ub (t), \qt } - s_h \LRp{ \Pi_h^{\Qt} \pt(t), \qt} &= (\div \ub(t), \qt) + \tilde{s}_h(\fb(t), \qt)
}
for any $(\vb, \qt) \in \Vbh \times \Qth$. \\
The stability of mixed methods (when $s_h = \tilde{s}_h = 0$) or stabilized methods guarantees the well-posedness of this problem, and furthermore, standard error analyses of mixed or stabilized methods for the Stokes equation give
\algn{ \label{eq:up-intp}
\norw{\ub(t) - \Pi_h^{\Vb} \ub (t)}{\Vb} + \|\pt(t) - \Pi_h^{\Qt} \pt(t)\|_{\Qt} \lesssim h^m (\norw{\ub(t)}{m+1} + \norw{\pt(t)}{m}) 
}
with $m \le \max \{ k_{\ub}-1, k_{\pt} \}$ which depends on the regularities of $\ub(t)$ and $\pt(t)$.

We define $\Pi_h^{\Qf} \pf(t)$ as the solution of another auxiliary problem: \\
{\bf (AP2)} Find $\Pi_h^{\Qf}\pf(t) \in \Qfh$ such that 
\algns{
(\kapb \nabla \Pi_h^{\Qf}\pf, \nabla \qf) = (\kapb \nabla \pf, \nabla \qf) \qquad \forall \qf \in \Qfh .
}
It is well-known that 
\algn{ \label{eq:pf-H1}
\| \pf(t) - \Pi_h^{\Qf} \pf(t) \|_{1, \kappa} \lesssim \kappa_0^{-\half} h^m \norw{\pf(t)}{m+1} 
}
holds with $m \le k_{\pf} - 1$ depending on the regularity of $\pf(t)$. 
If $\Omega$ satisfies the full elliptic regularity assumption and $\ul{\bs{\kappa}}$ is a Lipschitz continuous scalar field on $\Omega$, then 
\algn{ \label{eq:pf-L2}
\|{\pf(t) - \Pi_h^{\Qf} \pf(t)}\|_0 \lesssim h \norw{\pf(t) - \Pi_h^{\Qf} \pf(t)}{1,\kappa} 
}
holds as well.

Before we prove the a priori error analysis we discuss 
compatible numerical initial data. Note that \eqref{eq:weak-upp-semi1}, \eqref{eq:weak-upp-semi2} are algebraic equations, so our problem is a system of differential algebraic equations. 
When the backward Euler method is used for time discretization, compatible numerical data is not significant because the algebraic equation will be satisfied after one time step. 
However, numerical initial data satisfying this algebraic equation can be important for stability of numerical methods when other time discretization methods such as the Crank--Nicolson method are used. In order to have compatible numerical initial data, we can use the solution of 
\algns{
\LRp{2 \mu \e(\ubh ), \e(\vb ) } + \LRp{ \pth , \div \vb } &= (\fb(0), \vb), \\ 
\LRp{ \div \ubh , \qt } - s_h \LRp{\pth, \qt} - \LRp{ \lambda^{-1} \pth , \qt } - \LRp{\alpha \lambda^{-1} \pfh , \qt}   &= \tilde{s}_h(\fb(0), \qt), \\
- \LRp{\alpha^2 \lambda^{-1} \pth , \qf } - \LRp{\kapb \nabla \pfh , \nabla \qf }  &= - \LRp{\alpha^2 \lambda^{-1} \pt(0) , \qf } \\
&\quad - \LRp{\kapb \nabla \pf(0) , \nabla \qf } 
}
as numerical initial data. Since this is a stabilized saddle point problem with inf-sup condition, it is rather standard to show that the numerical initial data from this problem is a good approximation of initial data of the continuous problem.

In the theorem below we assume that the exact solutions are sufficiently regular and maximum approximation orders can be achieved in the Bramble--Hilbert lemma for simplicity of presentation.
In addition, we also assume that $\Pi^{\Qf} \pf$ is an approximation of $\pf$ with optimal order in the $L^2$ norm, i.e., \eqref{eq:pf-L2} holds.


\begin{theorem} \label{thm:eh-estm}
Suppose that $(\ub, \pt, \pf)$ is the solution of \eqref{eq:weak-upp-eq} with initial data $(\ub(0), \pt(0), \pf(0))$, and $(\ubh, \pth, \pfh)$ is the solution of \eqref{eq:weak-upp-semi} with numerical initial data $(\ubh(0), \pth(0), \pfh(0)) \in \Vbh \times \Qth \times \Qfh$ satisfying \eqref{eq:weak-upp-semi1}, \eqref{eq:weak-upp-semi2}, and 
\algn{
\label{eq:init-approx1} 
\norw{\pt(0) - \pth(0)}{\Qt} \lesssim h^{k_{\pt}} \norw{\pt(0)}{k_{\pt}} , \\
\label{eq:init-approx2} 
\norw{\pf(0) - \pfh(0)}{0} \lesssim h^{k_{\pf}} \norw{\pf(0)}{k_{\pf}} .
}
Then
\algn{
\label{eq:semi-error1} 
&\nor{\Pi_h^{\Vb} \ub - \ubh}_{L^\infty(0,t; \Vb)} + \norw{\Pi_h^{\Qt} \pt - \pth}{L^\infty(0,t; \Qt)} \\
\notag &+ \nor{\Pi_h^{\Qf}\pf - \pfh}_{L^\infty(0,t; L_{s_0}^2)} + \nor{\Pi_h^{\Qf}\pf - \pfh}_{L^2(0,t; H_{\kappa}^1)} \\
\notag & \quad \lesssim h^k \LRp{\nor{\pt(0) }_{H^k} + \nor{\pf(0)}_{H^k} + \nor{\dpt}_{L^1(0,t; H^k)} + \nor{\dpf}_{L^1(0,t; H^k)} }
}
and
\algn{
\label{eq:semi-error2} 
&\nor{\Pi_h^{\Vb} \dub - \dubh}_{L^2(0,t; \Vb)} + \norw{\Pi_h^{\Qt}\dpt - \dpth}{L^2(0,t; \Qt)}  \\
\notag &+ \nor{\Pi_h^{\Qf}\dpf - \dpfh}_{L^2(0,t; L_{s_0}^2)} + \nor{\Pi_h^{\Qf} \pf - \pfh}_{L^\infty(0,t; H_{\kappa}^1)} \\
\notag & \quad \lesssim \nor{\Pi_h^{\Qf}\pf(0) - \pfh(0)}_{1, \kappa} + h^{k} \norw{\dpt, \dpf}{\LtH{2}{}{k}} 
}
hold with $k = \min \{ k_{\pt}, k_{\pf} \}$.
\end{theorem}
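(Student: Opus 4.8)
The plan is to transplant the two energy arguments of Theorem~3.1 onto the discrete error components $\euh,\epth,\epfh$ from \eqref{eq:u-split}--\eqref{eq:pf-split}, using that the projections (AP1), (AP2) were engineered precisely to annihilate the consistency terms. First I would subtract the semidiscrete system \eqref{eq:weak-upp-semi} from the auxiliary problems. Subtracting \eqref{eq:weak-upp-semi1} from the first equation of (AP1) cancels the load and leaves the clean relation $\LRp{2\mu\e(\euh),\e(\vb)}+\LRp{\epth,\div\vb}=0$ for all $\vb\in\Vbh$; crucially the stabilization does not touch this equation. Subtracting \eqref{eq:weak-upp-semi2} from the second equation of (AP1) and eliminating $\LRp{\div\ub,\qt}$ through the continuous constraint \eqref{eq:weak-upp-eq2} pushes all interpolation errors to the right-hand side, giving $\LRp{\div\euh,\qt}-s_h(\epth,\qt)-\lambda^{-1}(\epth+\alpha\epfh,\qt)=\lambda^{-1}(\epti+\alpha\epfi,\qt)$. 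Finally, subtracting \eqref{eq:weak-upp-semi3} from \eqref{eq:weak-upp-eq3} (so that the data $g$ cancels) and invoking the Galerkin orthogonality of (AP2), which kills $\LRp{\kapb\nabla\epfi,\nabla\qf}$, produces a discrete flow equation whose source is again only the interpolation errors $\detpi,\depfi$.

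Next I would run the two energy estimates verbatim. For \eqref{eq:semi-error1} I test with $\vb=\deuh$ in the first error equation, with $\qt=-\epth$ in the time derivative of the second, and with $\qf=-\epfh$ in the third, and add; the cross terms $\LRp{\epth,\div\deuh}$ cancel exactly as in \eqref{eq:energy-eq}, the left side assembles into $\half\ddt\LRp{\nor{\euh}_{\Vb}^2+\nor{\epth+\alpha\epfh}_{0,\lambda^{-1}}^2+\nor{\epfh}_{0,s_0}^2+s_h(\epth,\epth)}+\nor{\epfh}_{1,\kappa}^2$, and the entire right side collapses to $-\lambda^{-1}(\detpi+\alpha\depfi,\epth+\alpha\epfh)-s_0(\depfi,\epfh)$. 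The decisive feature, and the reason no Gronwall inequality is needed, is that this source pairs a \emph{time derivative of an interpolation error} against an \emph{undifferentiated discrete error} that is itself one of the coercive energy quantities; after integrating in time it is controlled by an $L^1$-in-time interpolation norm times an $L^\infty$ energy norm, so the maximality-plus-Young argument of Theorem~3.1 (with $X(t),Y(t)$ replaced by their discrete-error analogues) closes the bound. For \eqref{eq:semi-error2} I instead test with $\vb=\deuh$ in the time derivative of the first equation, with $\qt=-\depth$ in the time derivative of the second, and with $\qf=-\depfh$ in the third; now the source $-\lambda^{-1}(\detpi+\alpha\depfi,\depth+\alpha\depfh)-s_0(\depfi,\depfh)$ pairs two time derivatives, so Young's inequality absorbs it into $\nor{\depth+\alpha\depfh}_{0,\lambda^{-1}}^2$ and $\nor{\depfh}_{0,s_0}^2$ on the left, leaving only $\nor{\epfh(0)}_{1,\kappa}$ and the integrated interpolation terms, as in \eqref{eq:stab-energy-estm1}--\eqref{eq:stab-energy-estm2}. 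I then replace every interpolation source by its rate from \eqref{eq:up-intp}, \eqref{eq:pf-H1}, \eqref{eq:pf-L2}.

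Two auxiliary steps finish the argument. The $L^\infty(0,t;\Qt)$ control of $\epth$ alone is invisible to the energy (which only sees the combination $\epth+\alpha\epfh$), so, mirroring \eqref{eq:stab-pt-estm}, I would recover it from the discrete stability \eqref{eq:stab-inf-sup} applied to the pair $(\euh,\epth)$, whose combined residual is $\lambda^{-1}(\epth+\alpha\epfh+\epti+\alpha\epfi)$; this bounds $\nor{\euh}_{\Vb}+\norw{\epth}{\Qt}$ by the already-estimated $\lambda^{-1}$-weighted pressure combination plus an interpolation term. The initial error is handled by the hypotheses \eqref{eq:init-approx1}, \eqref{eq:init-approx2} together with the fact that $(\ubh(0),\pth(0))$ satisfies \eqref{eq:weak-upp-semi1}, \eqref{eq:weak-upp-semi2}. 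I expect the main obstacle to be verification rather than invention: one must confirm that the projection-induced right-hand sides contain \emph{no} term of the form (discrete error) $\times$ (time derivative of discrete error), since only then does the Gronwall-free argument survive — this is exactly what the specific data in (AP1), (AP2) guarantees — and one must carefully strip the $\lambda^{-1}$- and $s_0$-weights from the interpolation sources, using the uniform bounds $\lambda\ge\lambda_{\min}$ and $s_0\le c_1$ together with the boundedness of $\mu$, to reach the parameter-free $h^k$ rates of the statement.
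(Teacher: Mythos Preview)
Your proposal is correct and follows essentially the same route as the paper: derive the reduced error equations from the projections (AP1), (AP2), run the two energy arguments of Theorem~3.1 on $(\euh,\epth,\epfh)$ with the same test-function choices, exploit the Gronwall-free structure of the source terms via the maximality-plus-Young trick, and recover $\norw{\epth}{\Qt}$ separately from the discrete inf-sup. The only cosmetic differences are that the paper writes the pressure combination as $\epth-\alpha\epfh$ (a sign convention) and treats the mixed case via \eqref{eq:mixed-inf-sup} in addition to the stabilized case via \eqref{eq:stab-inf-sup}; you mention only the latter, but the mixed case is strictly easier.
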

\begin{proof}
The difference of \eqref{eq:weak-upp-eq} and \eqref{eq:weak-upp-semi} gives 
\algns{
\LRp{2 \mu \e(\eu), \e(\vb ) } + \LRp{ \ept , \div \vb } &= 0, \\ 
\LRp{ \div \eu , \qt } + s_h \LRp{\pth, \qt} - \LRp{ \lambda^{-1} \ept , \qt } - \LRp{\alpha \lambda^{-1} \epf , \qt}   &= - \tilde{s}_h(\fb, \qt), \\
- \LRp{\alpha \lambda^{-1} \dot{e}_{\pt} , \qf } - \LRp{\LRp{s_0 + \alpha^2 \lambda^{-1} } \dot{e}_{\pf}, \qf } - \LRp{ \kapb \nabla \epf , \nabla \qf }  &= 0 .
}
From the decomposition \eqref{eq:u-split}--\eqref{eq:pf-split} and the equations of {\bf (AP1)}, {\bf (AP2)}, we have reduced error equations 
\subeqns{eq:err-eq}{
\label{eq:err-eq1} 
& \LRp{2 \mu \e(\euh), \e(\vb ) } + \LRp{ \epth , \div \vb } = 0 , \\
\label{eq:err-eq2} 
&\LRp{ \div \euh , \qt } - s_h \LRp{\epth, \qt} - \LRp{ \lambda^{-1} (\epth - \alpha \epfh), \qt }   \\
\notag & \qquad = \LRp{ \lambda^{-1} \epti , \qt } + (\alpha \lambda^{-1} \epfi , \qt) , \\
\label{eq:err-eq3} 
&- \LRp{ \alpha \lambda^{-1} \depth , \qf } - \LRp{\LRp{s_0 + \alpha^2 \lambda^{-1} } \depfh , \qf } - \LRp{ \kapb \nabla \epfh , \nabla \qf }  \\
\notag &\qquad = \LRp{ \alpha \lambda^{-1} \depti , \qf } - \LRp{\LRp{s_0 + \alpha^2 \lambda^{-1} } \depfi , \qf }  
}
for any $\vb \in \Vbh$, $\qt \in \Qth$, $\qf \in \Qfh$.

{\bf Proof of \eqref{eq:semi-error1}  }: We take $\vb = \deuh$ in \eqref{eq:err-eq1}, $\qt = -\epth$ in the time derivative of \eqref{eq:err-eq2}, $\qf = - \epfh$ in \eqref{eq:err-eq3}, and add them altogether. Then we have 
\algn{ \label{eq:err-energy-eq}
\half \ddt \LRp{ \nor{\euh}_{\Vb}^2 + s_h(\epth, \epth) + \nor{\epth - \alpha \epfh}_{0,\lambda^{-1}}^2 + \nor{\epfh}_{0,s_0}^2 } + \| \epfh \|_{1, \kappa}^2 \\
\notag =  -\LRp{ \lambda^{-1} (\depti - \alpha \depfi) , \epth - \alpha \epfh} 
+ \LRp{s_0 \depfi , \epfh } .
}
Defining 
\algns{
X(s)^2 &= \nor{\euh(s)}_{\Vb}^2 + s_h(\epth(s), \epth(s)) + \nor{\epth(s) - \alpha \epfh(s)}_{0,\lambda^{-1}}^2 + \nor{\epfh(s)}_{0,s_0}^2, 
}
and integrating \eqref{eq:err-energy-eq} from 0 to $t$, we have 
\algn{
\label{eq:err-int-ineq1} &\half (X(t)^2 - X(0)^2) + \int_0^t \| \epfh(s) \|_{1,\kappa}^2 \,ds,  \\
\notag & \quad = \int_0^t \LRs{-\LRp{ \lambda^{-1} (\depti(s) - \alpha \depfi(s)) , \epth(s) - \alpha \epfh(s)} + \LRp{s_0 \depfi (s), \epfh (s) } } ds \\
\notag & \quad \le \norw{ \depti - \alpha \depfi}{\LtL{1}{\lambda^{-1}}{2}} \norw{\epth - \alpha \epfh}{\LtL{\infty}{\lambda^{-1}}{2}} \\
\notag & \qquad + \norw{ \depfi}{\LtL{1}{s_0}{2}} \norw{\epfh}{\LtL{\infty}{s_0}{2}} .
}
Adopting the argument of the estimate of $X(t)$ in the previous section, we may assume that $X(t) = \max_{s \in (0,t]} X(s)$ without loss of generality. Then 
\algns{
\half X(t)^2 
\le \half X(0)^2 + \max \left\{ \norw{ \depti - \alpha \depfi}{\LtL{1}{\lambda^{-1}}{2}}, \norw{ \depfi}{\LtL{1}{s_0}{2}} \right\} X(t) . 
}
By Young's inequality and the arithmetic-geometric mean inequality, we can obtain
\algns{
X(t) \le X(0) + 2 \max \left\{ \norw{ \depti - \alpha \depfi}{\LtL{1}{\lambda^{-1}}{2}}, \norw{ \depfi}{\LtL{1}{s_0}{2}} \right\} .
}
As a corollary, assuming the exact solution is sufficiently smooth, we obtain 
\mltln{ \label{eq:euh-estm}
\norw{\euh}{L^\infty(0, t; \Vb) } + \max_{s \in [0,t]} s_h(\epth, \epth)^{\half} + \nor{\epth - \alpha \epfh}_{L^\infty(0,t; L_{\lambda^{-1}}^2)} \\
+ \norw{\epfh}{\LtL{\infty}{s_0}{2}} \lesssim X(0) + h^{k} \norw{ \dpt, \dpf }{\LtH{1}{}{k}} 
}
where $k = \min \{ k_{\pt}, k_{\pf} \}$. 
Note that the implicit constant in this estimate is independent of parameter scales, i.e.,
for large $\mu$, arbitrarily large $\lambda$, small $\kappa_{0}$ and $\kappa_1$, and small or degenerate $s_0$. 
For mixed methods, the equation \eqref{eq:err-eq1} and the inf-sup condition \eqref{eq:mixed-inf-sup} can be used to obtain
\algn{ \label{eq:pt-estm}
\norw{\epth}{L^\infty(0, t; \Qt)} \lesssim X(0) + h^{k} \norw{ \dpt, \dpf }{\LtH{1}{}{k}} ,\qquad k = \min \{ k_{\pt}, k_{\pf} \} .
}
In case of stabilized methods, for any $t \in (0, T]$, there exists $(\vb, \qt)$ such that $\nor{\vb}_{\Vb} + \nor{\qt}_{\Qt} \le 1$ and 
\mltlns{ 
\nor{\euh(t)}_{\Vb} + \nor{\epth(t)}_{\Qt} \lesssim \\
\LRp{2 \mu \e(\euh(t)), \e(\vb ) } + \LRp{ \epth (t), \div \vb } + \LRp{ \div \euh (t), \qt } - s_h \LRp{\epth (t), \qt} .
}
Using this $(\vb, \qt)$ with \eqref{eq:err-eq1} and \eqref{eq:err-eq2}, we get 
\algn{
\label{eq:upt-estm} &\nor{\euh(t)}_{\Vb} + \nor{\epth(t)}_{\Qt} \\
\notag &\quad \lesssim \LRp{ \lambda^{-1} (\epth (t) - \alpha \epfh(t)), \qt } + \LRp{ \lambda^{-1} \epti (t), \qt } + (\alpha \lambda^{-1} \epfi (t), \qt) , \\
\notag &\quad \lesssim \nor{\epth (t) - \alpha \epfh (t) }_{0,\lambda^{-1}} + \nor{\epti (t) - \alpha \epfi(t)}_{0,\lambda^{-1}} \\
\notag &\quad \lesssim X(0) + h^{k} \norw{ \dpt, \dpf }{\LtH{1}{}{k}} , \qquad k = \min \{ k_{\pt}, k_{\pf} \}, 
}
where we used \eqref{eq:euh-estm} in the last inequality.

To estimate $\nor{\epfh}_{L^2(0,t; H_{\kappa}^1)}$, we use \eqref{eq:err-int-ineq1} and get 
\algns{
\half X(t)^2 + \int_0^t \nor{\epfh(s)}_{1,\kappa}^2 \,ds &\le \half X(0)^2 + \norw{ \depti - \alpha \depfi}{\LtL{1}{\lambda^{-1}}{2}} X(t) \\
&\quad + \norw{ \depfi}{\LtL{1}{s_0}{2}} X(t) .
}
By Young's inequality, 
\algn{ \label{eq:epfh-H1-estm}
\nor{\epfh}_{L^2(0,t; H_{\kappa}^1)} \lesssim X(0) + h^{k} \norw{ \dpt, \dpf }{\LtH{1}{}{k}} , \qquad k = \min \{ k_{\pt}, k_{\pf} \} .
}

To complete the proof, we need to estimate $X(0)$. Recall that $(\ubh(0), \pth(0), \pfh(0))$ satisfies \eqref{eq:weak-upp-semi1} and \eqref{eq:weak-upp-semi2} at $t=0$.
Recall also that $(\Pi_h^{\Vb} \ub(0), \Pi_h^{\Qt} \pt(0))$ satisfies {\bf (AP1)} at $t= 0$. Noting that $\div \ub(0) = \lambda^{-1} \pt(0) + \lambda^{-1} \alpha \pf(0)$,
$(\euh (0), \epth(0), \epfh(0))$ satisfies 
\algns{
&\LRp{2 \mu \e(\euh (0) ), \e(\vb ) } + \LRp{ \epth(0) , \div \vb } = 0 , \\
&\LRp{ \div \euh(0) , \qt } - s_h \LRp{\epth(0), \qt} = \LRp{ \lambda^{-1} (\epti (0) - \alpha \epfi(0)), \qt }
}
for all $\vb \in \Vbh$, $\qt \in \Qth$, therefore 
\algns{
\nor{\euh(0)}_{\Vb} + \nor{\epth(0)}_{\Qt} \lesssim h^k \nor{\pt (0), \pf(0)}_{H^k}, \qquad  k = \min \{ k_{\pt}, k_{\pf} \} .
}
From the boundedness of $s_h(\cdot, \cdot)$, 
\algns{
X(0) \lesssim \nor{\euh(0)}_{\Vb} + \nor{\epth(0)}_{\Qt} + \nor{\epfh(0)}_{L^2_{s_0}} \lesssim h^k \nor{\pt (0), \pf(0)}_{H^k}, 
}
with $k = \min \{ k_{\pt}, k_{\pf} \}$.


{\bf Proof of \eqref{eq:semi-error2}  }: We now estimate $\| \epfh(t) \|_{L^\infty(0,t; H_\kappa^1)}$. For this, we take $\vb = \deuh$ in the time derivative of \eqref{eq:err-eq1}, $\qt = - \depth$ in the time derivative of \eqref{eq:err-eq2}, $\qf = -\depfh$ in \eqref{eq:err-eq3}, and add the equations altogether. Then 
\algn{ \label{eq:err-d-energy-eq}
&\nor{\deuh(t)}_{\Vb}^2 + \nor{\depth(t) - \alpha \depfh(t)}_{0,\lambda^{-1}}^2 + \nor{\depfh(t)}_{0,s_0}^2 + \half \ddt \| \epfh(t) \|_{1,\kappa }^2 \\
\notag &\quad =  -\LRp{ \lambda^{-1} \depti - \alpha \lambda^{-1} \depfi , \depth - \alpha \depfh} + \LRp{s_0 \depfi , \depfh}
}
Integrating it from 0 to $t$ and using Young's inequality, we get 
\mltlns{
\half \| \epfh(t) \|_{1,\kappa}^2 + \int_0^t \LRs{ \nor{\deuh(s)}_{\Vb}^2 + \half \nor{\depth(s) - \alpha \depfh(s)}_{0,\lambda^{-1}}^2 + \half \nor{\depfh(s)}_{0,s_0}^2} \,ds \\
\le \half \| \epfh(0) \|_{1,\kappa}^2 + \half \int_0^t \LRs{ \nor{\depti(s) - \alpha \depfi(s)}_{0,\lambda^{-1}}^2 + \nor{\depfi(s)}_{0,s_0}^2 }\,ds .
}
In particular, 
\algns{
&\| \epfh(t) \|_{1,\kappa} +  \nor{\deuh}_{L^2(0,t; \Vb)} + \nor{\depth - \alpha \depfh}_{L^2(0,t; L^2_{\lambda^{-1}})} + \nor{\depfh}_{L^2(0,t; L^2_{s_0})} \\
&\lesssim \| \epfh(0) \|_{1,\kappa} + \nor{\depti - \alpha \depfi}_{\LtL{2}{\lambda^{-1}}{2}} + \nor{\depfi}_{\LtL{2}{s_0}{2}} \\
&\lesssim \| \epfh(0) \|_{1,\kappa} + h^{k} \norw{\dpt, \dpf}{\LtH{2}{}{k}} , \qquad k = \min\{k_{\pt}, k_{\pf} \} .
}
In this estimate, the implicit constants are uniformly bounded for small $\kappa_{0}$, $\kappa_{1}$, large $\mu$, arbitrarily large $\lambda$, and small or degenerate $s_0$.
\end{proof}
\begin{cor}
Under the same assumptions in Theorem~\ref{thm:eh-estm} and an additional assumption 
\algn{
\label{eq:init-approx3} \norw{\pf(0) - \pfh(0)}{1,\kappa} \lesssim h^{k_{\pf}-1} \norw{\pf(0)}{H^{k_{\pf}}}, 
}
we can show that 
\algns{
&\nor{ \ub - \ubh}_{L^\infty(0,t; \Vb)} + \norw{ \pt - \pth}{L^\infty(0,t; \Qt)} + \nor{ \pf - \pfh}_{L^\infty(0,t; L_{s_0}^2)} \\
\notag & \quad \lesssim h^k \LRp{ \nor{\ub }_{L^\infty(0,t; H^k)} + \nor{\pt}_{W^{1,1}(0,t; H^k)} + \nor{\pf}_{W^{1,1}(0,t; H^k)} }
}
with $k = \min \{k_{\ub}-1, k_{\pt}, k_{\pf} \}$, 
\algns{
& \nor{\pf - \pfh}_{L^2(0,t; H_{\kappa}^1)} \\
\notag & \quad \lesssim h^k \LRp{\nor{\pt(0) }_{H^k} + \nor{\pf(0)}_{H^k} + \nor{\dpt}_{L^1(0,t; H^k)} + \nor{\dpf}_{L^1(0,t; H^{k})} + \nor{\pf}_{L^2(0,t; H^{k+1})}}
}
with $k = \min \{k_{\pt}, k_{\pf}-1 \}$, 
and
\algns{
&\nor{\dub - \dubh}_{L^2(0,t; \Vb)} + \norw{\dpt - \dpth}{L^2(0,t; \Qt)} + \nor{\dpf - \dpfh}_{L^2(0,t; L_{s_0}^2)} \\
\notag & \quad \lesssim \nor{\Pi_h^{\Qf}\pf(0) - \pfh(0)}_{1, \kappa} + h^{k} \norw{\dub, \dpt, \dpf}{\LtH{2}{}{k}} 
}
with $k = \min \{ k_{\ub} - 1, k_{\pt}, k_{\pf} \}$, 
\algns{
&\nor{\pf - \pfh}_{L^\infty(0,t; H_{\kappa}^1)}
\lesssim h^k \LRp{ \nor{\pf }_{L^\infty(0,t; H^{k+1})}+ \norw{\dpt, \dpf}{\LtH{2}{}{k}} } 
}
with $k= \min\{k_{\pf}-1, k_{\pt} \}$ hold.
\end{cor}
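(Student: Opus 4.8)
The plan is to derive each of the four inequalities by the triangle inequality applied to the splittings \eqref{eq:u-split}--\eqref{eq:pf-split}, bounding the interpolation parts $\eui$, $\epti$, $\epfi$ by the static approximation estimates \eqref{eq:up-intp}, \eqref{eq:pf-H1}, \eqref{eq:pf-L2}, and the discrete parts $\euh$, $\epth$, $\epfh$ by Theorem~\ref{thm:eh-estm}. For the first bound I would write $\nor{\ub-\ubh}_{\Vb} \le \nor{\eui}_{\Vb}+\nor{\euh}_{\Vb}$, and likewise for $\pt$ and for $\pf$ in the $L^2_{s_0}$ norm (using $s_0 \le c_1$ together with \eqref{eq:pf-L2} to convert the $H^1_\kappa$ interpolation bound into an $L^2$ one), then take $L^\infty(0,t;\cdot)$ norms. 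The interpolation contributions reproduce the exact-solution terms $\nor{\ub}_{L^\infty(0,t;H^k)}$, $\nor{\pt}_{W^{1,1}(0,t;H^k)}$, $\nor{\pf}_{W^{1,1}(0,t;H^k)}$ on the right (recalling $W^{1,1}(0,t)\hookrightarrow L^\infty(0,t)$ to absorb the pointwise-in-time parts), while \eqref{eq:semi-error1} supplies the time-derivative parts; matching the three approximation orders forces $k=\min\{k_{\ub}-1,k_{\pt},k_{\pf}\}$, the $-1$ on $k_{\ub}$ coming from the extra regularity demanded in \eqref{eq:up-intp}. The same splitting gives the $L^2(0,t;H^1_\kappa)$ bound on $\pf$, with $\nor{\epfi}_{L^2(0,t;H^1_\kappa)}$ controlled by \eqref{eq:pf-H1} (producing the $\nor{\pf}_{L^2(0,t;H^{k+1})}$ term with $k=k_{\pf}-1$) and $\nor{\epfh}_{L^2(0,t;H^1_\kappa)}$ by \eqref{eq:epfh-H1-estm}, so $k=\min\{k_{\pt},k_{\pf}-1\}$.

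For the time-derivative estimates I would first record that, since the auxiliary problems (AP1), (AP2) are linear with spatial forms independent of $t$, differentiating them in time shows $\tfrac{d}{dt}\Pi_h^{\Vb}\ub=\Pi_h^{\Vb}\dub$, $\tfrac{d}{dt}\Pi_h^{\Qt}\pt=\Pi_h^{\Qt}\dpt$, and $\tfrac{d}{dt}\Pi_h^{\Qf}\pf=\Pi_h^{\Qf}\dpf$. Hence the time derivatives of the interpolation errors are exactly the interpolation errors of $\dub$, $\dpt$, $\dpf$, to which \eqref{eq:up-intp} and \eqref{eq:pf-H1} apply verbatim. Then $\nor{\dub-\dubh}_{L^2(0,t;\Vb)} \le \nor{\deui}_{L^2(0,t;\Vb)}+\nor{\Pi_h^{\Vb}\dub-\dubh}_{L^2(0,t;\Vb)}$, the first term giving the $h^k\norw{\dub,\dpt,\dpf}{\LtH{2}{}{k}}$ contribution and the second bounded by \eqref{eq:semi-error2}; the $\pt$ and $\pf$ components are handled identically, and for $\nor{\pf-\pfh}_{L^\infty(0,t;H^1_\kappa)}$ the interpolation part contributes $h^k\nor{\pf}_{L^\infty(0,t;H^{k+1})}$ via \eqref{eq:pf-H1}.

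Finally I would dispose of the residual initial term $\nor{\Pi_h^{\Qf}\pf(0)-\pfh(0)}_{1,\kappa}$ appearing on the right of \eqref{eq:semi-error2}: by the triangle inequality it is at most $\nor{\Pi_h^{\Qf}\pf(0)-\pf(0)}_{1,\kappa}+\nor{\pf(0)-\pfh(0)}_{1,\kappa}$, the first bounded by \eqref{eq:pf-H1} and the second by the new hypothesis \eqref{eq:init-approx3}, both of order $h^{k_{\pf}-1}\nor{\pf(0)}_{H^{k_{\pf}}}$, so it is absorbed into the $h^k$ terms for the $L^\infty H^1_\kappa$ bound and retained explicitly for the third bound as stated. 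The work is essentially bookkeeping, since all stability has already been extracted in Theorem~\ref{thm:eh-estm} and no further energy argument is needed. The one genuine subtlety — and the step I would be most careful to justify — is the commutation of time differentiation with the projectors $\Pi_h^{\Vb}$, $\Pi_h^{\Qt}$, $\Pi_h^{\Qf}$, which is precisely what licenses applying the static approximation estimates to $\dub$, $\dpt$, $\dpf$; once that is secured, taking the appropriate minima for $k$ in each inequality is routine.
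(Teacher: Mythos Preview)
Your proposal is correct and follows exactly the approach the paper takes: the paper's proof reads, in its entirety, ``These assertions can be proved easily from the results in Theorem~\ref{thm:eh-estm} and the triangle inequality, so we omit details.'' You have simply supplied the omitted bookkeeping --- including the commutation of $\partial_t$ with the projectors and the use of \eqref{eq:init-approx3} to absorb the initial term --- which is precisely what the paper intends.
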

\begin{proof}
These assertions can be proved easily from the results in Theorem~\ref{thm:eh-estm} and the triangle inequality, so we omit details.
\end{proof}

\section{Parameter-robust preconditioning}
In this section we discuss preconditioners of the finite element discretizations robust for certain parameter scales. 
In most applications, the parameters $\mu$, $\lambda$, $\kappa$ are in the ranges 
\algn{ \label{eq:param-range}
0 < \kappa_{0}, \kappa_{1} \ll 1 \ll \mu \lesssim \lambda \leq + \infty .
}
It turns out that preconditioners efficient for the model problem with unit parameter values do not perform well for problems with realistic parameter values.
In fact, construction of preconditioners robust for all variations of parameters in \eqref{eq:param-range} is the motivation of \cite{LeeEtAl2017}, and abstract form of parameter-robust block diagonal preconditioners are studied for discretizations with Taylor--Hood and MINI elements. 
Therefore we only focus on preconditioners for discretizations with the two stabilized methods in \eqref{eq:stab-method1} and \eqref{eq:stab-method2}. Following the approach in \cite{LeeEtAl2017}, we first define parameter-dependent discrete norms of $\Vbh$, $\Qth$, $\Qfh$, and show that the stability of the system with the parameter-dependent norms. Then we can derive abstract forms of block diagonal preconditioners based on the parameter-dependent norms. The numerical results we will present in the last section show that performances of algebraic multigrid block diagonal preconditioners based on the abstract forms are robust for parameter scales. 

Before we define parameter-dependent norms, we consider fully discrete schemes of the system to reduce the preconditioning problem. 
In fully discretization scheme of \eqref{eq:weak-upp-semi} with time step size $\lap t >0$, we solve a static system 
\subeqns{eq:weak-upp-static}{
\label{eq:weak-upp-static1} \LRp{2 \mu \e(\ubh ), \e(\vb ) } + \LRp{ \pth , \div \vb } &= (\tilde{\fb}, \vb), \\ 
\label{eq:weak-upp-static2} \LRp{ \div \ubh , \qt } - s_h \LRp{\pth, \qt} - \LRp{ \lambda^{-1} \pth , \qt } - \LRp{\alpha \lambda^{-1} \pfh , \qt}   &= (\tilde{f}, \qt), \\
\label{eq:weak-upp-static3} - \LRp{ \alpha \lambda^{-1} \pth , \qf } - \LRp{\LRp{s_0 + \alpha^2  \lambda^{-1} } \pfh , \qf } - \LRp{\kapb \nabla  \pfh , \nabla \qf }  &= \LRp{ \tilde{g}, \qf } 
}
for all $(\vb, \qt, \qf) \in \Vbh \times \Qth \times \Qfh$ at each time step but $\kapb$ here is  
$\kapb \lap t$ with $\kapb$ in the previous section, and $\tilde{\fb}$, $\tilde{f}$, $\tilde{g}$ are right-hand side terms depending on time discretization schemes. 

Let us define norms of $\Vbh$, $\Qth$, $\Qfh$ as 
\algns{
\nor{\vb}_{\Vbh}^2 &= (2 \mu \e(\vb) , \e(\vb)) , \qquad \nor{\qt}_{\Qth}^2 = ((2\mu)^{-1} \qt, \qt) + s_h(\qt, \qt) , \\
\nor{\qf}_{\Qfh}^2 &= \norw{\qf}{0,s_0}^2 + (\kapb \nabla \qf, \nabla \qf) , 
}
and let $\Xh = \Vbh \times \Qth \times \Qfh$ be the Hilbert space with the norm
\algns{
\norw{(\vb, \qt, \qf)}{\Xh}^2 = \norw{\vb}{\Vbh}^2 + \norw{\qt}{\Qth}^2 + \norw{\Qf}{\Qfh}^2 .
}

We define a linear operator $\mathcal{A}$ from $\Xh$ to its dual space $\Xh^*$ using the left-hand side of \eqref{eq:weak-upp-static} as
\algns{
&\LRa{\mathcal{A} (\ub, \pt, \pf), (\vb, \qt, \qf)}_{(\Xh^*, \Xh)} \\
&\quad = \LRp{2 \mu \e(\ub ), \e(\vb ) } + \LRp{ \pt , \div \vb } + \LRp{ \div \ub , \qt } - s_h \LRp{\pt, \qt} - \LRp{ \lambda^{-1} \pt , \qt } - \LRp{\alpha \lambda^{-1} \pf , \qt} \\
&\qquad - \LRp{ \alpha \lambda^{-1} \pt , \qf } - \LRp{\LRp{s_0 + \alpha^2  \lambda^{-1} } \pf , \qf } - \LRp{\kapb \nabla \pf , \nabla \qf }  
}
for $(\ub, \pt, \pf), (\vb, \qt, \qf) \in \Xh$, where $\LRa{\cdot, \cdot}_{(\Xh^*, \Xh)}$ is the duality pairing of $\Xh$ and $\Xh^*$. 
We claim that $\mc{A}$ is an isomorphism from $\Xh$ to $\Xh^*$ such that $\norw{\mc{A}}{L(\Xh, \Xh^*)}$ and $\norw{\mc{A}^{-1}}{L(\Xh^*, \Xh)}$ are independent of mesh sizes and the parameters in the ranges of \eqref{eq:param-range}. 

\begin{theorem}
There exists ${\beta} >0$, independent of the scales of $\mu$, $\kapb$, $\lambda$ in \eqref{eq:param-range}, and the mesh sizes, such that the following inf-sup condition holds: 
\begin{align*}
\inf_{ (\ub , \pt, \pf) \in \Xh } 
\sup_{(\vb , \qt, \qf) \in \Xh } 
\frac{( {\mathcal{A}} (\ub, \pt, \pf), (\vb, \qt, \qf))_{(\Xh^* , \Xh)} } 
{\| (\ub, \pt, \pf) \|_{\Xh} \| (\vb, \qt, \qf) \|_{\Xh} }  \geq {\beta} .
\end{align*}
\end{theorem}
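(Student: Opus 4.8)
The statement is an inf-sup bound for the block operator $\mc{A}$ in the parameter-weighted norm of $\Xh$, so the plan is to follow the operator-preconditioning recipe. Since $\Xh$ is finite dimensional and $\mc{A}:\Xh\to\Xh^*$ is square, a uniform inf-sup bound already certifies that $\mc{A}$ is an isomorphism with $\norw{\mc{A}^{-1}}{L(\Xh^*,\Xh)}\le\beta^{-1}$ (the companion continuity bound, needed only for $\norw{\mc{A}}{L(\Xh,\Xh^*)}$, follows from Cauchy--Schwarz together with $\mu\lesssim\lambda$ from \eqref{eq:param-range} and I would treat it as routine). I would prove the lower bound by constructing, for each $(\ub,\pt,\pf)\in\Xh$, an explicit test tuple of norm comparable to $\norw{(\ub,\pt,\pf)}{\Xh}$. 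The natural first choice is the diagonal tuple $(\vb,\qt,\qf)=(\ub,-\pt,-\pf)$: the two divergence couplings cancel, the stabilization yields $+s_h(\pt,\pt)$, and the $\lambda^{-1}$-weighted contributions assemble into a single perfect square, giving
\[
\LRa{\mc{A}(\ub,\pt,\pf),(\ub,-\pt,-\pf)}_{(\Xh^*,\Xh)}=\nor{\ub}_{\Vbh}^2+s_h(\pt,\pt)+\norw{\pt+\alpha\pf}{0,\lambda^{-1}}^2+\norw{\pf}{\Qfh}^2 .
\]
This controls $\nor{\ub}_{\Vbh}^2$, the stabilization part $s_h(\pt,\pt)$ of $\norw{\pt}{\Qth}^2$, and all of $\norw{\pf}{\Qfh}^2$, but it misses the $L^2$ part $((2\mu)^{-1}\pt,\pt)$ of $\norw{\pt}{\Qth}^2$ --- the usual saddle-point deficiency.

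To recover control of $((2\mu)^{-1}\pt,\pt)$ I would invoke the stabilized inf-sup condition \eqref{eq:stab-inf-sup}: for the given $(\ub,\pt)$ there is $(\vb^*,\qt^*)\in\Vbh\times\Qth$, normalized so that $\nor{\vb^*}_{\Vb}+\norw{\qt^*}{\Qt}=\nor{\ub}_{\Vb}+\norw{\pt}{\Qt}$, with $\mc{B}(\ub,\pt;\vb^*,\qt^*)\gtrsim(\nor{\ub}_{\Vb}+\norw{\pt}{\Qt})^2$. Testing $\mc{A}$ against $(\vb^*,\qt^*,0)$ reproduces exactly $\mc{B}(\ub,\pt;\vb^*,\qt^*)$ plus the single stray coupling $-\LRp{\lambda^{-1}(\pt+\alpha\pf),\qt^*}$. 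The point is that this stray term is harmless: writing $\norw{\qt^*}{0}=\sqrt{2\mu}\,\norw{\qt^*}{\Qt}$ one gets $\lambda^{-1}\norw{\pt+\alpha\pf}{0}\norw{\qt^*}{0}=\sqrt{2}\,\sqrt{\mu/\lambda}\;\norw{\pt+\alpha\pf}{0,\lambda^{-1}}\,\norw{\qt^*}{\Qt}$, and the prefactor $\sqrt{\mu/\lambda}$ is bounded by a constant precisely because $\mu\lesssim\lambda$ in \eqref{eq:param-range} (and vanishes as $\lambda\to+\infty$). Young's inequality then splits it into a piece absorbable into $\mc{B}(\ub,\pt;\vb^*,\qt^*)$ and a multiple of $\norw{\pt+\alpha\pf}{0,\lambda^{-1}}^2$, which is already controlled by the diagonal term above.

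Finally I would combine the two tests: take $(\vb,\qt,\qf)=(\ub,-\pt,-\pf)+\de(\vb^*,\qt^*,0)$ with a small fixed $\de>0$. Summing the two estimates and choosing the Young parameter and $\de$ as absolute constants, the stray coupling and the $\norw{\pt+\alpha\pf}{0,\lambda^{-1}}^2$ terms are controlled, leaving
\[
\LRa{\mc{A}(\ub,\pt,\pf),(\vb,\qt,\qf)}_{(\Xh^*,\Xh)}\gtrsim\nor{\ub}_{\Vbh}^2+\norw{\pt}{\Qth}^2+\norw{\pf}{\Qfh}^2=\norw{(\ub,\pt,\pf)}{\Xh}^2,
\]
where the missing $((2\mu)^{-1}\pt,\pt)$ now comes from the $\de\,\norw{\pt}{\Qt}^2$ produced by the inf-sup test, while boundedness of $s_h$ gives $\norw{\qt^*}{\Qth}\lesssim\norw{\qt^*}{\Qt}$, so that $\norw{(\vb,\qt,\qf)}{\Xh}\lesssim\norw{(\ub,\pt,\pf)}{\Xh}$; dividing yields the claimed $\beta$. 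I expect the one genuinely delicate point to be the treatment of the stray coupling $-\LRp{\lambda^{-1}(\pt+\alpha\pf),\qt^*}$: it is exactly here that parameter robustness is won or lost, and it works only because the weighting of $\Qth$ by $(2\mu)^{-1}$ together with $\mu\lesssim\lambda$ keeps $\sqrt{\mu/\lambda}$ uniformly bounded. Crucially, the degeneracy of $s_0$ and the smallness of $\kapb$ never enter this absorption, which is why robustness survives the full range \eqref{eq:param-range}.
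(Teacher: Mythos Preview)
Your argument is correct, but it takes a different route from the paper's proof. The paper perturbs only the velocity test function: it invokes a \emph{weak divergence inf-sup} of the form
\[
\sup_{\vb\in\Vbh}\frac{(\div\vb,\qt)}{\norw{\vb}{\Vb}}\ge 2C_1\norw{\qt}{\Qt}-2C_2\,s_h(\qt,\qt)^{1/2},
\]
picks $\wb$ realizing this bound for $\qt=\pt$ with $\norw{\wb}{\Vb}=\norw{\pt}{\Qt}$, and tests with $(\ub+\delta\wb,-\pt,-\pf)$. Because the pressure test stays at $-\pt$, the $\lambda^{-1}$ terms reassemble exactly into the nonnegative square $\norw{\pt+\alpha\pf}{0,\lambda^{-1}}^2$ and no stray coupling appears; the only competition is between $\delta(2\mu\e(\ub),\e(\wb))$, $\delta(\div\wb,\pt)$, and $s_h(\pt,\pt)$, resolved by explicit choices of Young parameters. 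In particular, the paper's proof never uses $\mu\lesssim\lambda$.

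Your approach instead perturbs both $\vb$ and $\qt$ via the abstract stabilized Stokes inf-sup \eqref{eq:stab-inf-sup}, which has the virtue of using exactly the assumption the paper states rather than the method-specific weak inf-sup above. The price is the stray term $-\LRp{\lambda^{-1}(\pt+\alpha\pf),\qt^*}$, which you absorb by Cauchy--Schwarz in the $\lambda^{-1}$-weighted inner product and the bound $\norw{\qt^*}{0,\lambda^{-1}}\lesssim\norw{\qt^*}{\Qt}$; this last step is precisely where $\mu\lesssim\lambda$ from \eqref{eq:param-range} enters. Within the stated parameter range both proofs are valid; the paper's is slightly sharper in that it would survive dropping the constraint $\mu\lesssim\lambda$, while yours is more modular in that it treats the Stokes block as a black box satisfying \eqref{eq:stab-inf-sup}.
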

\begin{proof}
To prove the assertion, for given $(0, 0, 0) \not = (\ub , \pt, \pf) \in \Xh$, we will find $(\vb , \qt, \qf) \in \Xh$ such that 
\begin{align}
\label{eq:inf-sup-sub1} \| (\vb , \qt, \qf) \|_{\Xh} &\leq C \| (\ub , \pt, \pf) \|_{\mathcal{X}}, \\
\label{eq:inf-sup-sub2} ({\mathcal{A}} (\ub , \pt, \pf), (\vb , \qt, \qf))_{(\Xh^*, \Xh )} &\geq C' {\| (\ub , \pt, \pf) \|_{\Xh}^2} ,
\end{align}
with $C, C'>0$ independent of the scales of $\mu$, $\lambda$, $\kapb$, and mesh sizes.

Suppose that $(0, 0, 0) \not = (\ub, \pt, \pf) \in \mathcal{X}$ is given. 

For stabilized methods, there exist $C_1, C_2 >0$ independent of mesh sizes and parameters such that  
\algns{
\sup_{\vb \in \Vbh} \frac{(\div \vb, \qt)}{\norw{\vb}{\Vb}} \ge 2C_1 \norw{\qt}{\Qt} - 2C_2 (s_h(\qt, \qt) )^{\half} \qquad \forall \qt \in \Qth .
}
From this there exists $\wb \in \Vbh$ such that 
\algn{ \label{eq:w-estm}
(\div \wb, \pt) \ge \LRp{ {C_1 \norw{\pt}{\Qt} - C_2 (s_h(\pt, \pt) )^{\half} } }\norw{\wb}{\Vb} .
}
Due to linearity of this inequality in $\wb$ we may rescale $\wb$ so that $\norw{\wb}{\Vb} = \norw{\pt}{\Qt}$. 

To prove \eqref{eq:inf-sup-sub1} and \eqref{eq:inf-sup-sub2}, we set $\vb = \ub + \delta \wb$, $\qt = - \pt$, $\qf = -\pf$ with a constant $\delta>0$ which will be determined later. One can check that 
\begin{align} 
\notag \| (\vb , \qt, \qf) \|_{\Xh} \leq {\sqrt{2(1 + \delta^2 )}} \| (\ub , \pt, \pf) \|_{\Xh} , 
\end{align}
and \eqref{eq:inf-sup-sub1} follows if $\delta$ is independent of the parameters and mesh sizes. 
To establish \eqref{eq:inf-sup-sub2} and determine $\delta$, we use the previously chosen $\vb$, $\qt$, $\qf$, and \eqref{eq:w-estm} to have 
\begin{align} 
\notag &\LRa{ {\mathcal{A}} (\ub , \pt, \pf), (\vb , \qt, \qf)}_{(\Xh^* , \Xh)} \\
\label{eq:inf-sup-bilinear} &= \| \ub \|_{\Vbh}^2 + \delta (2 \mu \e(\ub), \e(\wb)) + \delta (\div \wb, \pt) + s_h(\pt, \pt)  \\
\notag & \quad + ( \lambda^{-1} \pt, \pt) + ((s_0 + \alpha^2 \lambda^{-1}) \pf, \pf) + 2 (\alpha \lambda^{-1}\pt, \pf)) + (\kapb \nabla \pf, \nabla \pf)  
\end{align}
By Young's inequality and the fact $\norw{\wb}{\Vb} = \norw{\pt}{\Qt}$, we also have
\begin{align*}
& \delta (2 \mu \e(\bs{u}), \e(\bs{w})) \leq \frac{\delta \theta}{2} \| \bs{u} \|_{\Vb}^2 + \frac{\delta}{2 \theta} \| \wb \|_{\Vb}^2 \le \frac{\delta \theta}{2} \| \ub \|_{\Vb}^2 + \frac{\delta}{2 \theta} \| \pt \|_{\Qt}^2 
\quad \forall \theta >0.
\end{align*}
By \eqref{eq:w-estm} and Young's inequality, 
\algns{
\delta (\div \ub, \pt) &\ge \delta \LRp{C_1 \norw{\pt}{\Qt} - C_2 (s_h(\pt, \pt) )^{\half} } \norw{\wb}{\Vb} \\
&\ge \delta C_1 \norw{\pt}{\Qt}^2 - \delta C_2 \LRp{\frac{\eta}{2} s_h(\pt, \pt) + \frac{1}{2\eta} \norw{\pt}{\Qt}^2 }
}
for any $\eta >0$. From these we can get 
\algns{ 
&\LRa{ {\mathcal{A}} (\ub , \pt, \pf), (\vb , \qt, \qf)}_{(\Xh^* , \Xh)} \\
&= \LRp{ 1 - \frac{\delta \theta}{2} } \nor{\ub}_{\Vb}^2 + \delta \LRp{C_1 - \frac{1}{2\theta} - \frac{C_2}{2 \eta} } \nor{\pt}_{\Qt}^2 + \LRp{1 - \delta \frac{C_2 \eta}{2} } s_h(\pt, \pt) \\
&\quad + \nor{ \pt - \alpha \pf}_{\lambda^{-1}}^2 + \nor{\pf}_{s_0}^2 + \nor{ \pf}_{1,\kappa}^2 .
}
We now set 
\algns{
\theta = \frac{2}{C_1},\qquad \eta = \frac{2 C_2}{C_1}, \qquad \delta = \min \LRb{\frac{C_1}{2}, \frac{C_1}{2 C_2^2} }, 
}
and get 
\mltlns{ 
\LRa{ {\mathcal{A}} (\ub , \pt, \pf), (\vb , \qt, \qf)}_{(\Xh^* , \Xh)} \\
\ge \half \| \ub \|_{\Vb}^2 + \half s_h(\pt, \pt) + \frac{\delta C_1}{2} \norw{\pt}{\Qt}^2 + \norw{\pf}{0,s_0}^2 + \nor{\pf}_{1,\kappa}^2  .
}
Since $C_1$, $C_2$ are independent of parameters and mesh sizes, so is $\delta$, and therefore \eqref{eq:inf-sup-sub1} and \eqref{eq:inf-sup-sub2}
are proved.
\end{proof}

\begin{table}[h]
	\begin{center}
		\begin{tabular}{c | c c | c c | c c | c c} 
			\multirow{3}{*}{$N$} & \multicolumn{2}{c}{$\nor{\pt - \pth}_{L^2}$}& \multicolumn{2}{c}{$\nor{\pf - \pfh}_{L^2}$}& \multicolumn{2}{c}{$\nor{\ub - \ubh}_{H^1}$}& \multicolumn{2}{c}{$\nor{\pf - \pfh}_{1,\kappa}$}\\ 
\cline{2-9}& error & rate & error & rate & error & rate & error & rate \\
\hline
8 & 4.342e-02  & $-$  & 3.527e-03  & $-$  & 5.725e-02  & $-$  & 1.127e-01  & $-$ \\
16 & 1.071e-02  & 2.02  & 8.826e-04  & 2.00  & 1.424e-02  & 2.01  & 5.642e-02  & 1.00 \\
32 & 2.669e-03  & 2.00  & 2.207e-04  & 2.00  & 3.559e-03  & 2.00  & 2.822e-02  & 1.00 \\
64 & 6.668e-04  & 2.00  & 5.519e-05  & 2.00  & 8.897e-04  & 2.00  & 1.411e-02  & 1.00 \\
128 & 1.667e-04  & 2.00  & 1.380e-05  & 2.00  & 2.225e-04  & 2.00  & 7.056e-03  & 1.00 \\
		\hline
		\end{tabular} 
		\caption{Errors and convergence rates with the lowest order Taylor--Hood finite elements} 
		\label{TH-conv} 
	\end{center} 
\end{table}

\begin{table}[h]
	\begin{center}
		\begin{tabular}{c | c c | c c | c c | c c} 
			\multirow{3}{*}{$N$} & \multicolumn{2}{c}{$\nor{\pt - \pth}_{L^2}$}& \multicolumn{2}{c}{$\nor{\pf - \pfh}_{L^2}$}& \multicolumn{2}{c}{$\nor{\ub - \ubh}_{H^1}$}& \multicolumn{2}{c}{$\nor{\pf - \pfh}_{1,\kappa}$}\\ 
\cline{2-9}& error & rate & error & rate & error & rate & error & rate \\
\hline
8 & 6.024e+00  & $-$  & 3.549e-03  & $-$  & 7.017e+00  & $-$  & 1.139e-01  & $-$ \\
16 & 3.748e+00  & 0.68  & 1.409e-03  & 1.33  & 4.438e+00  & 0.66  & 5.723e-02  & 0.99 \\
32 & 1.519e+00  & 1.30  & 5.280e-04  & 1.42  & 1.793e+00  & 1.31  & 2.843e-02  & 1.01 \\
64 & 4.642e-01  & 1.71  & 1.643e-04  & 1.68  & 5.463e-01  & 1.71  & 1.415e-02  & 1.01 \\
128 & 1.259e-01  & 1.88  & 4.586e-05  & 1.84  & 1.568e-01  & 1.80  & 7.061e-03  & 1.00 \\
		\hline
		\end{tabular} 
		\caption{Errors and convergence rates with the Brezzi--Pitk\"{a}ranta stabilized method} 
		\label{BP-conv} 
	\end{center} 
\end{table}

\begin{table}[h]
	\begin{center}
		\begin{tabular}{c | c c | c c | c c | c c} 
			\multirow{3}{*}{$N$} & \multicolumn{2}{c}{$\nor{\pt - \pth}_{L^2}$}& \multicolumn{2}{c}{$\nor{\pf - \pfh}_{L^2}$}& \multicolumn{2}{c}{$\nor{\ub - \ubh}_{H^1}$}& \multicolumn{2}{c}{$\nor{\pf - \pfh}_{1,\kappa}$}\\ 
\cline{2-9}& error & rate & error & rate & error & rate & error & rate \\
\hline
8 & 5.051e+00  & $-$  & 1.149e-02  & $-$  & 5.816e+00  & $-$  & 1.223e-01  & $-$ \\
16 & 2.604e+00  & 0.96  & 2.726e-03  & 2.08  & 2.906e+00  & 1.00  & 5.772e-02  & 1.08 \\
32 & 1.349e+00  & 0.95  & 6.126e-04  & 2.15  & 1.402e+00  & 1.05  & 2.835e-02  & 1.03 \\
64 & 6.832e-01  & 0.98  & 1.490e-04  & 2.04  & 6.762e-01  & 1.05  & 1.413e-02  & 1.01 \\
128 & 3.296e-01  & 1.05  & 3.787e-05  & 1.98  & 3.082e-01  & 1.13  & 7.058e-03  & 1.00 \\
		\hline
		\end{tabular} 
		\caption{Errors and convergence rates with the $\mc{P}_1$-$\mc{P}_0$ stabilized method} 
		\label{CGDG-conv} 
	\end{center} 
\end{table}

The above stability in the parameter-dependent norm $\mc{X}_h$ suggests an abstract form of preconditioner 
\algn{ \label{eq:precond}
\bs{P} = \pmat{P_{\ub} & & \\ & P_{\pt} & \\ & & P_{\pf} }
}
with $P_{\ub}$, $P_{\pt}$, $P_{\pf}$ which are (approximate) inverses of the maps
\algns{
\ub \mapsto -\div (2 \mu \e (\ub)), \qquad \pt \mapsto (1/\mu) \pt, \qquad \pf \mapsto (s_0 + \alpha^2 \lambda^{-1})\pf - \div (\kapb \nabla \pf) .
}

\begin{table}[th]
	\begin{center}
		\begin{tabular}{>{\small}c | >{\small}c | >{\small}c || >{\small}c >{\small}c >{\small}c >{\small}c }  
\hline 
& & & \multicolumn{4}{>{\small}c}{$\kappa$} \\ 
$N$ & $\mu$ & $\lambda/\mu$ & $10^0$ & $10^{-3}$ & $10^{-6}$ & $10^{-9}$ \\ 
\hline \hline 
\multirow{9}{*} {$16$} & \multirow{3}{*} {$10^0$} & $10^0$ & $44 \; (0.19)$& $56 \; (0.24)$& $65 \; (0.27)$& $65 \; (0.27)$\\ 
& & $10^3$ & $59 \; (0.27)$& $55 \; (0.24)$& $70 \; (0.29)$& $67 \; (0.27)$\\ 
& & $10^6$& $60 \; (0.25)$& $54 \; (0.22)$& $42 \; (0.18)$& $53 \; (0.22)$\\ 
\cline{3-7} 
& \multirow{3}{*} {$10^3$} & $10^0$ & $41 \; (0.17)$& $41 \; (0.17)$& $54 \; (0.22)$& $62 \; (0.25)$\\ 
& & $10^3$& $59 \; (0.25)$& $59 \; (0.25)$& $52 \; (0.22)$& $67 \; (0.29)$\\ 
& & $10^6$ & $59 \; (0.25)$& $59 \; (0.25)$& $51 \; (0.22)$& $38 \; (0.16)$\\ 
\cline{3-7} 
& \multirow{3}{*} {$10^6$} & $10^0$ & $41 \; (0.17)$& $41 \; (0.17)$& $41 \; (0.17)$& $54 \; (0.23)$\\ 
& & $10^3$ & $59 \; (0.25)$& $59 \; (0.25)$& $59 \; (0.25)$& $52 \; (0.22)$\\ 
& & $10^6$ & $59 \; (0.25)$& $59 \; (0.25)$& $59 \; (0.25)$& $51 \; (0.22)$\\ 
\cline{3-7} 
\cline{2-7} 
\multirow{9}{*} {$32$} & \multirow{3}{*} {$10^0$} & $10^0$ & $46 \; (0.41)$& $55 \; (0.53)$& $67 \; (0.63)$& $67 \; (0.61)$\\ 
& & $10^3$ & $61 \; (0.56)$& $56 \; (0.52)$& $73 \; (0.66)$& $69 \; (0.61)$\\ 
& & $10^6$ & $62 \; (0.58)$& $55 \; (0.54)$& $42 \; (0.40)$& $54 \; (0.51)$\\ 
\cline{3-7} 
& \multirow{3}{*} {$10^3$} & $10^0$ & $42 \; (0.41)$& $42 \; (0.41)$& $52 \; (0.50)$& $63 \; (0.58)$\\ 
& & $10^3$ & $61 \; (0.58)$& $61 \; (0.58)$& $52 \; (0.48)$& $68 \; (0.60)$\\ 
& & $10^6$ & $61 \; (0.59)$& $61 \; (0.60)$& $52 \; (0.49)$& $38 \; (0.36)$\\ 
\cline{3-7} 
& \multirow{3}{*} {$10^6$} & $10^0$ & $42 \; (0.41)$& $42 \; (0.41)$& $42 \; (0.41)$& $52 \; (0.47)$\\ 
& & $10^3$ & $60 \; (0.58)$& $61 \; (0.59)$& $61 \; (0.62)$& $52 \; (0.53)$\\ 
& & $10^6$ & $60 \; (0.52)$& $61 \; (0.59)$& $61 \; (0.63)$& $52 \; (0.53)$\\ 
\cline{3-7} 
\cline{2-7} 
\multirow{9}{*} {$64$} & \multirow{3}{*} {$10^0$} & $10^0$ & $46 \; (1.47)$& $55 \; (1.91)$& $67 \; (1.98)$& $68 \; (2.16)$\\ 
& & $10^3$ & $61 \; (1.88)$& $56 \; (1.72)$& $72 \; (2.26)$& $69 \; (2.37)$\\ 
& & $10^6$ & $61 \; (2.19)$& $56 \; (1.88)$& $42 \; (1.40)$& $53 \; (1.93)$\\ 
\cline{3-7} 
& \multirow{3}{*} {$10^3$} & $10^0$ & $42 \; (1.51)$& $42 \; (1.37)$& $50 \; (1.52)$& $63 \; (1.86)$\\ 
& & $10^3$ & $61 \; (1.85)$& $61 \; (1.97)$& $52 \; (1.84)$& $66 \; (2.05)$\\ 
& & $10^6$& $61 \; (2.05)$& $61 \; (2.14)$& $52 \; (1.82)$& $37 \; (1.34)$\\ 
\cline{3-7} 
& \multirow{3}{*} {$10^6$} & $10^0$ & $42 \; (1.55)$& $42 \; (1.60)$& $42 \; (1.43)$& $50 \; (1.86)$\\ 
& & $10^3$ & $60 \; (2.10)$& $61 \; (2.19)$& $61 \; (2.12)$& $52 \; (1.84)$\\ 
& & $10^6$ & $60 \; (2.26)$& $61 \; (1.86)$& $61 \; (1.88)$& $52 \; (1.63)$\\ 
\cline{3-7} 
\cline{2-7} 
\multirow{9}{*} {$128$} & \multirow{3}{*} {$10^0$} & $10^0$ & $46 \; (7.30)$& $55 \; (8.39)$& $67 \; (10.19)$& $70 \; (10.53)$\\ 
& & $10^3$ & $63 \; (10.27)$& $56 \; (8.78)$& $72 \; (11.37)$& $68 \; (10.63)$\\ 
& & $10^6$ & $63 \; (10.50)$& $56 \; (9.09)$& $41 \; (6.36)$& $52 \; (7.50)$\\ 
\cline{3-7} 
& \multirow{3}{*} {$10^3$} & $10^0$ & $42 \; (6.53)$& $42 \; (7.12)$& $50 \; (7.80)$& $63 \; (9.11)$\\ 
& & $10^3$ & $62 \; (9.13)$& $62 \; (10.03)$& $52 \; (8.69)$& $66 \; (10.63)$\\ 
& & $10^6$ & $62 \; (9.87)$& $62 \; (10.10)$& $52 \; (8.29)$& $37 \; (5.94)$\\ 
\cline{3-7} 
& \multirow{3}{*} {$10^6$} & $10^0$ & $42 \; (6.72)$& $42 \; (6.54)$& $42 \; (5.86)$& $50 \; (6.95)$\\ 
& & $10^3$ & $62 \; (8.48)$& $62 \; (8.49)$& $61 \; (8.50)$& $52 \; (7.18)$\\ 
& & $10^6$ & $62 \; (8.45)$& $62 \; (8.32)$& $62 \; (8.34)$& $52 \; (7.02)$\\ 
\cline{3-7} 
\cline{2-7} 
\multirow{9}{*} {$256$} & \multirow{3}{*} {$10^3$} & $10^0$ & $46 \; (25.82)$& $54 \; (30.11)$& $65 \; (35.70)$& $69 \; (38.01)$\\ 
& & $10^3$ & $62 \; (35.41)$& $56 \; (31.50)$& $71 \; (40.29)$& $70 \; (38.55)$\\ 
& & $10^6$ & $63 \; (35.49)$& $55 \; (31.34)$& $41 \; (23.18)$& $51 \; (29.03)$\\ 
\cline{3-7} 
& \multirow{3}{*} {$10^3$} & $10^3$ & $43 \; (24.18)$& $43 \; (24.33)$& $51 \; (28.79)$& $59 \; (32.19)$\\ 
& & $10^3$ & $60 \; (34.44)$& $60 \; (33.83)$& $51 \; (28.70)$& $64 \; (35.27)$\\ 
& & $10^6$ & $60 \; (34.06)$& $60 \; (33.44)$& $50 \; (28.59)$& $37 \; (21.02)$\\ 
\cline{3-7} 
& \multirow{3}{*} {$10^6$} & $10^0$ & $42 \; (24.00)$& $42 \; (23.66)$& $42 \; (24.02)$& $50 \; (28.52)$\\ 
& & $10^3$ & $60 \; (33.83)$& $60 \; (33.90)$& $60 \; (33.89)$& $51 \; (28.97)$\\ 
& & $10^6$ & $60 \; (33.80)$& $60 \; (33.91)$& $60 \; (34.40)$& $50 \; (28.71)$\\ 
\cline{3-7} 
\hline 
		\end{tabular} 
		\caption{Number of iterations and wall-clock time for one solve with the Taylor--Hood element} 
		\label{table:TH-precond} 
	\end{center} 
\end{table} 

\begin{table}[ht]
	\begin{center}
		\begin{tabular}{>{\small}c | >{\small}c | >{\small}c || >{\small}c >{\small}c >{\small}c >{\small}c }  
\hline 
& & & \multicolumn{4}{>{\small}c}{$\kappa$} \\ 
$N$ & $\mu$ & $\lambda/\mu$ & $10^0$ & $10^{-3}$ & $10^{-6}$ & $10^{-9}$ \\ 
\hline \hline 
\multirow{9}{*} {$16$} & \multirow{3}{*} {$10^0$} & $10^0$ & $21 \; (0.07)$& $32 \; (0.10)$& $30 \; (0.09)$& $30 \; (0.09)$\\ 
& & $10^3$ & $22 \; (0.07)$& $20 \; (0.06)$& $28 \; (0.09)$& $26 \; (0.08)$\\ 
& & $10^6$ & $22 \; (0.07)$& $20 \; (0.06)$& $16 \; (0.05)$& $23 \; (0.07)$\\ 
\cline{3-7} 
& \multirow{3}{*} {$10^3$} & $10^0$ & $17 \; (0.06)$& $20 \; (0.06)$& $30 \; (0.09)$& $28 \; (0.09)$\\ 
& & $10^3$ & $21 \; (0.07)$& $21 \; (0.07)$& $19 \; (0.06)$& $26 \; (0.08)$\\ 
& & $10^6$ & $21 \; (0.07)$& $21 \; (0.07)$& $18 \; (0.06)$& $14 \; (0.05)$\\ 
\cline{3-7} 
& \multirow{3}{*} {$10^9$} & $10^0$ & $17 \; (0.06)$& $17 \; (0.06)$& $19 \; (0.06)$& $30 \; (0.10)$\\ 
& & $10^3$ & $21 \; (0.07)$& $21 \; (0.07)$& $21 \; (0.07)$& $19 \; (0.06)$\\ 
& & $10^6$ & $21 \; (0.07)$& $21 \; (0.07)$& $21 \; (0.07)$& $18 \; (0.06)$\\ 
\cline{3-7} 
\cline{2-7} 
\multirow{9}{*} {$32$} & \multirow{3}{*} {$1$} & $1$ & $24 \; (0.11)$& $37 \; (0.17)$& $35 \; (0.16)$& $35 \; (0.16)$\\ 
& & $10^3$ & $29 \; (0.13)$& $25 \; (0.12)$& $33 \; (0.16)$& $31 \; (0.14)$\\ 
& & $10^6$ & $29 \; (0.14)$& $25 \; (0.12)$& $20 \; (0.09)$& $26 \; (0.12)$\\ 
\cline{3-7} 
& \multirow{3}{*} {$10^3$} & $10^0$ & $19 \; (0.09)$& $22 \; (0.11)$& $35 \; (0.16)$& $32 \; (0.15)$\\ 
& & $10^3$ & $26 \; (0.13)$& $26 \; (0.13)$& $23 \; (0.11)$& $30 \; (0.15)$\\ 
& & $10^6$ & $26 \; (0.12)$& $26 \; (0.12)$& $23 \; (0.11)$& $18 \; (0.09)$\\ 
\cline{3-7} 
& \multirow{3}{*} {$10^6$} & $10^0$ & $19 \; (0.09)$& $19 \; (0.09)$& $22 \; (0.11)$& $35 \; (0.16)$\\ 
& & $10^3$ & $26 \; (0.12)$& $26 \; (0.12)$& $26 \; (0.12)$& $23 \; (0.11)$\\ 
& & $10^6$ & $26 \; (0.13)$& $26 \; (0.13)$& $26 \; (0.12)$& $23 \; (0.11)$\\ 
\cline{3-7} 
\cline{2-7} 
\multirow{9}{*} {$64$} & \multirow{3}{*} {$10^0$} & $10^0$ & $26 \; (0.31)$& $40 \; (0.48)$& $38 \; (0.41)$& $38 \; (0.39)$\\ 
& & $10^3$ & $35 \; (0.39)$& $31 \; (0.34)$& $38 \; (0.43)$& $36 \; (0.37)$\\ 
& & $10^6$ & $35 \; (0.40)$& $30 \; (0.35)$& $24 \; (0.28)$& $29 \; (0.36)$\\ 
\cline{3-7} 
& \multirow{3}{*} {$10^3$} & $10^0$ & $21 \; (0.25)$& $24 \; (0.29)$& $37 \; (0.44)$& $36 \; (0.39)$\\ 
& & $10^3$ & $32 \; (0.38)$& $32 \; (0.35)$& $28 \; (0.31)$& $35 \; (0.41)$\\ 
& & $10^6$ & $32 \; (0.38)$& $32 \; (0.35)$& $27 \; (0.30)$& $20 \; (0.23)$\\ 
\cline{3-7} 
& \multirow{3}{*} {$10^6$} & $10^0$ & $20 \; (0.23)$& $21 \; (0.24)$& $24 \; (0.27)$& $37 \; (0.44)$\\ 
& & $10^3$ & $32 \; (0.38)$& $32 \; (0.38)$& $32 \; (0.37)$& $27 \; (0.32)$\\ 
& & $10^6$ & $32 \; (0.38)$& $32 \; (0.38)$& $32 \; (0.38)$& $27 \; (0.32)$\\ 
\cline{3-7} 
\cline{2-7} 
\multirow{9}{*} {$128$} & \multirow{3}{*} {$10^0$} & $10^0$ & $28 \; (1.08)$& $44 \; (1.71)$& $43 \; (1.54)$& $42 \; (1.51)$\\ 
& & $10^3$ & $39 \; (1.63)$& $33 \; (1.33)$& $42 \; (1.75)$& $40 \; (1.51)$\\ 
& & $10^6$ & $39 \; (1.64)$& $33 \; (1.36)$& $26 \; (1.07)$& $31 \; (1.31)$\\ 
\cline{3-7} 
& \multirow{3}{*} {$10^3$} & $10^0$ & $23 \; (0.98)$& $25 \; (1.05)$& $40 \; (1.70)$& $39 \; (1.54)$\\ 
& & $10^3$ & $37 \; (1.56)$& $37 \; (1.57)$& $32 \; (1.36)$& $40 \; (1.74)$\\ 
& & $10^6$ & $37 \; (1.60)$& $37 \; (1.57)$& $32 \; (1.33)$& $23 \; (1.00)$\\ 
\cline{3-7} 
& \multirow{3}{*} {$10^6$} & $10^0$ & $22 \; (0.95)$& $23 \; (0.99)$& $25 \; (1.07)$& $40 \; (1.72)$\\ 
& & $10^3$ & $36 \; (1.59)$& $36 \; (1.55)$& $36 \; (1.53)$& $31 \; (1.36)$\\ 
& & $10^6$ & $36 \; (1.51)$& $36 \; (1.53)$& $36 \; (1.54)$& $31 \; (1.28)$\\ 
\cline{3-7} 
\cline{2-7} 
\multirow{9}{*} {$256$} & \multirow{3}{*} {$10^0$} & $10^0$ & $28 \; (4.83)$& $46 \; (8.01)$& $46 \; (8.10)$& $44 \; (6.71)$\\ 
& & $10^3$ & $42 \; (8.16)$& $35 \; (7.12)$& $45 \; (8.65)$& $44 \; (7.78)$\\ 
& & $10^6$ & $42 \; (7.55)$& $35 \; (6.38)$& $26 \; (5.02)$& $33 \; (6.27)$\\ 
\cline{3-7} 
& \multirow{3}{*} {$10^3$} & $10^3$ & $25 \; (4.80)$& $26 \; (5.19)$& $42 \; (7.19)$& $43 \; (6.62)$\\ 
& & $10^3$ & $41 \; (7.53)$& $41 \; (7.82)$& $34 \; (6.34)$& $42 \; (8.35)$\\ 
& & $10^6$ & $41 \; (7.47)$& $41 \; (7.16)$& $34 \; (6.63)$& $26 \; (5.22)$\\ 
\cline{3-7} 
& \multirow{3}{*} {$10^6$} & $10^0$ & $22 \; (4.54)$& $24 \; (4.73)$& $26 \; (4.97)$& $41 \; (7.65)$\\ 
& & $10^3$ & $40 \; (7.56)$& $40 \; (8.14)$& $40 \; (7.57)$& $33 \; (6.48)$\\ 
& & $10^6$ & $40 \; (7.80)$& $40 \; (8.04)$& $40 \; (7.89)$& $33 \; (6.70)$\\ 
\cline{3-7} 
\hline 
		\end{tabular} 
		\caption{Number of iterations and wall-clock time for one solve with the Brezzi--Pitk\"{a}ranta stabilized method} 
		\label{table:BP-precond} 
	\end{center} 
\end{table} 

\begin{table}[ht]
	\begin{center}
		\begin{tabular}{>{\small}c | >{\small}c | >{\small}c || >{\small}c >{\small}c >{\small}c >{\small}c }  
\hline 
& & & \multicolumn{4}{>{\small}c}{$\kappa$} \\ 
$N$ & $\mu$ & $\lambda$ & $10^0$ & $10^{-3}$ & $10^{-6}$ & $10^{-9}$ \\ 
\hline \hline 
\multirow{9}{*} {$16$} & \multirow{3}{*} {$10^0$} & $10^0$ & $24 \; (0.08)$& $38 \; (0.12)$& $37 \; (0.12)$& $37 \; (0.12)$\\ 
& & $10^3$ & $33 \; (0.11)$& $30 \; (0.10)$& $39 \; (0.13)$& $36 \; (0.12)$\\ 
& & $10^6$ & $33 \; (0.11)$& $29 \; (0.10)$& $22 \; (0.07)$& $30 \; (0.10)$\\ 
\cline{3-7} 
& \multirow{3}{*} {$10^3$} & $10^0$ & $20 \; (0.07)$& $23 \; (0.08)$& $37 \; (0.12)$& $35 \; (0.11)$\\ 
& & $10^3$ & $31 \; (0.10)$& $32 \; (0.11)$& $29 \; (0.09)$& $38 \; (0.12)$\\ 
& & $10^6$ & $31 \; (0.10)$& $32 \; (0.10)$& $28 \; (0.09)$& $22 \; (0.08)$\\ 
\cline{3-7} 
& \multirow{3}{*} {$10^6$} & $10^0$ & $19 \; (0.06)$& $20 \; (0.07)$& $23 \; (0.08)$& $37 \; (0.12)$\\ 
& & $10^3$ & $31 \; (0.10)$& $31 \; (0.10)$& $32 \; (0.10)$& $29 \; (0.09)$\\ 
& & $10^6$ & $31 \; (0.10)$& $31 \; (0.10)$& $32 \; (0.10)$& $28 \; (0.09)$\\ 
\cline{3-7} 
\cline{2-7} 
\multirow{9}{*} {$32$} & \multirow{3}{*} {$10^0$} & $10^0$ & $26 \; (0.13)$& $42 \; (0.20)$& $41 \; (0.20)$& $41 \; (0.20)$\\ 
& & $10^3$ & $38 \; (0.19)$& $34 \; (0.17)$& $43 \; (0.21)$& $40 \; (0.19)$\\ 
& & $10^6$ & $38 \; (0.19)$& $33 \; (0.16)$& $26 \; (0.13)$& $33 \; (0.16)$\\ 
\cline{3-7} 
& \multirow{3}{*} {$10^3$} & $10^0$ & $21 \; (0.11)$& $24 \; (0.12)$& $40 \; (0.20)$& $39 \; (0.19)$\\ 
& & $10^3$ & $36 \; (0.18)$& $36 \; (0.19)$& $32 \; (0.17)$& $42 \; (0.22)$\\ 
& & $10^6$ & $36 \; (0.18)$& $36 \; (0.19)$& $32 \; (0.16)$& $24 \; (0.13)$\\ 
\cline{3-7} 
& \multirow{3}{*} {$10^6$} & $10^0$ & $21 \; (0.11)$& $21 \; (0.11)$& $24 \; (0.13)$& $40 \; (0.20)$\\ 
& & $10^3$ & $35 \; (0.18)$& $36 \; (0.19)$& $36 \; (0.19)$& $32 \; (0.17)$\\ 
& & $10^6$ & $35 \; (0.18)$& $36 \; (0.19)$& $37 \; (0.19)$& $32 \; (0.16)$\\ 
\cline{3-7} 
\cline{2-7} 
\multirow{9}{*} {$64$} & \multirow{3}{*} {$10^0$} & $10^0$ & $26 \; (0.33)$& $43 \; (0.54)$& $43 \; (0.50)$& $43 \; (0.49)$\\ 
& & $10^3$ & $39 \; (0.48)$& $35 \; (0.43)$& $46 \; (0.56)$& $41 \; (0.48)$\\ 
& & $10^6$ & $39 \; (0.47)$& $35 \; (0.43)$& $27 \; (0.34)$& $35 \; (0.43)$\\ 
\cline{3-7} 
& \multirow{3}{*} {$10^3$} & $10^0$ & $22 \; (0.28)$& $25 \; (0.31)$& $41 \; (0.52)$& $40 \; (0.46)$\\ 
& & $10^3$ & $37 \; (0.46)$& $38 \; (0.47)$& $34 \; (0.41)$& $43 \; (0.55)$\\ 
& & $10^6$ & $37 \; (0.45)$& $38 \; (0.48)$& $34 \; (0.42)$& $25 \; (0.33)$\\ 
\cline{3-7} 
& \multirow{3}{*} {$10^6$} & $10^0$ & $21 \; (0.28)$& $22 \; (0.28)$& $24 \; (0.32)$& $41 \; (0.48)$\\ 
& & $10^3$ & $36 \; (0.42)$& $37 \; (0.46)$& $38 \; (0.50)$& $33 \; (0.43)$\\ 
& & $10^6$ & $36 \; (0.44)$& $37 \; (0.43)$& $38 \; (0.45)$& $33 \; (0.39)$\\ 
\cline{3-7} 
\cline{2-7} 
\multirow{9}{*} {$128$} & \multirow{3}{*} {$10^0$} & $10^0$ & $28 \; (1.25)$& $46 \; (1.98)$& $46 \; (1.86)$& $46 \; (1.77)$\\ 
& & $10^3$ & $42 \; (1.82)$& $38 \; (1.67)$& $49 \; (2.13)$& $43 \; (1.62)$\\ 
& & $10^6$ & $42 \; (1.67)$& $37 \; (1.48)$& $28 \; (1.12)$& $37 \; (1.58)$\\ 
\cline{3-7} 
& \multirow{3}{*} {$10^3$} & $10^0$ & $24 \; (1.04)$& $27 \; (1.06)$& $43 \; (1.70)$& $44 \; (1.65)$\\ 
& & $10^3$ & $40 \; (1.75)$& $40 \; (1.78)$& $37 \; (1.61)$& $46 \; (2.02)$\\ 
& & $10^6$ & $40 \; (1.58)$& $40 \; (1.71)$& $35 \; (1.53)$& $28 \; (1.18)$\\ 
\cline{3-7} 
& \multirow{3}{*} {$10^6$} & $10^0$ & $23 \; (1.03)$& $24 \; (1.07)$& $26 \; (1.18)$& $43 \; (1.80)$\\ 
& & $10^3$ & $38 \; (1.64)$& $39 \; (1.56)$& $40 \; (1.83)$& $36 \; (1.68)$\\ 
& & $10^6$ & $38 \; (1.82)$& $39 \; (1.72)$& $40 \; (1.72)$& $35 \; (1.47)$\\ 
\cline{3-7} 
\cline{2-7} 
\multirow{9}{*} {$256$} & \multirow{3}{*} {$10^0$} & $10^0$ & $29 \; (5.25)$& $46 \; (9.32)$& $49 \; (7.97)$& $48 \; (7.97)$\\ 
& & $10^3$ & $50 \; (9.03)$& $44 \; (8.35)$& $57 \; (10.94)$& $54 \; (9.90)$\\ 
& & $10^6$ & $50 \; (9.34)$& $44 \; (8.31)$& $33 \; (6.76)$& $43 \; (8.90)$\\ 
\cline{3-7} 
& \multirow{3}{*} {$10^3$} & $10^0$ & $25 \; (4.85)$& $28 \; (5.29)$& $45 \; (9.21)$& $47 \; (8.14)$\\ 
& & $10^3$ & $48 \; (9.19)$& $48 \; (9.40)$& $43 \; (7.84)$& $55 \; (10.93)$\\ 
& & $10^6$ & $48 \; (9.76)$& $48 \; (9.52)$& $43 \; (8.55)$& $32 \; (6.44)$\\ 
\cline{3-7} 
& \multirow{3}{*} {$10^6$} & $10^0$ & $24 \; (4.82)$& $24 \; (4.84)$& $27 \; (5.40)$& $44 \; (9.02)$\\ 
& & $10^3$ & $46 \; (8.77)$& $47 \; (8.23)$& $47 \; (8.00)$& $42 \; (7.22)$\\ 
& & $10^6$ & $46 \; (7.82)$& $47 \; (7.97)$& $48 \; (8.07)$& $42 \; (7.11)$\\ 
\cline{3-7} 
\hline 
		\end{tabular} 
		\caption{Number of iterations and wall-clock time for one solve with the $\mc{P}_1$--$\mc{P}_0$ stabilized method} 
		\label{table:CGDG-precond} 
	\end{center} 
\end{table} 

\section{Numerical results}

In this section we present the results of numerical experiments. All numerical experiments are performed with FEniCS version 2017.2.0. 

In the first numerical experiments, we show convergence of finite finite element methods. 
The computational $\Omega$ is the unit square $[0,1]\times [0,1]$ and is divided into $N \times N$ uniform squares, i.e., $h = 1/N$,
and then each squares are divided into to two triangles to obtain the triangulation $\mathcal{T}_h$. 
To illustrate convergence of errors, we consider a manufactured solution of the problem with 
\algns{
\ub = \pmat{\sin(\pi x) \sin(1 + t) \\ \sin y \sin t }, \qquad p = x^2 y^2 \cos t
}
and parameters $\mu = 10$, $\lambda = 15$, $\alpha = 1$, $s_0 = 1$, $\kappa = 1$.
For boundary conditions we impose Dirichlet boundary conditions of $\ub$ on $\Gamma_d := \{0\} \times [0,1] \cup \{1\} \times [0,1]$
and of $\pf$ on $\Gamma_p := \pd \Omega$.

We consider the lowest order Taylor--Hood element, the Brezzi--Pitk\"{a}ranta stabilized method (cf. \eqref{eq:stab-method1}), and the $\mc{P}_1$--$\mc{P}_0$ stabilized method (cf. \eqref{eq:stab-method2}). We use the backward Euler time discretization with time step $\lap t = h^2$ and the errors are computed at $t = 0.5$.
Convergence rates of errors for mesh refinements are given in Tables~\ref{TH-conv}--\ref{CGDG-conv}. 

Although parameter-robust preconditioning for mixed methods are already studied in \cite{LeeEtAl2017}, we show the results of mixed method and stabilized methods for comparision.
To construct preconditioners based on \eqref{eq:precond} for mixed methods, we use the algebraic multigrid method for the blocks of $\ub$ and $\pf$ but use the Jacobi preconditioner for the block of $\pt$ as in \cite{LeeEtAl2017}:  
\algns{
\pmat{\text{AMG} (A_{\ub}) & & \\ & \text{Jacobi}(A_{\pt}) & \\ & & \text{AMG}(A_{\pf}) } 
}
where $A_{\ub}$, $A_{\pt}$, $A_{\pf}$ are matrices obtained from the bilinear forms
\algns{
(2 \mu \e(\ub), \e(\vb)), \quad ((2\mu)^{-1} \pt, \qt) , \quad ((s_0 + \alpha^2 \lambda^{-1}) \pf, \qf) + (\kapb \nabla \pf, \nabla \qf) .
}
For stabilized methods our preconditioners have the form 
\algns{
\pmat{\text{AMG} (A_{\ub}) & & \\ & \text{AMG}(A_{\pt}) & \\ & & \text{AMG}(A_{\pf}) }
}
where $A_{\ub}$, $A_{\pt}$, $A_{\pf}$ are matrices obtained by 
\algns{
(2 \mu \e(\ub), \e(\vb)), \quad ((2\mu)^{-1} \pt, \qt) + s_h(\pt, \qt), \quad ((s_0 + \alpha^2 \lambda^{-1}) \pf, \qf) + (\kapb \nabla \pf, \nabla \qf) 
}
for each stabilized method, and MinRes algorithm is used for iterative solvers. For algebraic multigrid methods we use the algebraic multigrid package Hypre AMG.

To test robustness of these preconditioners for mesh refinements, and parameter values, we consider the cases with meshes $N = 16, 32, 64,128,256$, $\mu = 1, 10^3, 10^6$, $\lambda/\mu = 1, 10^3, 10^6$, and scalar $\kappa = 1, 10^{-3}, 10^{-6}, 10^{-9}$. At each case, we only test the static problem with randomly generated right-hand side vectors, and measured number of iterations with relative tolerance $10^{-6}$, and measured the wall-clock time for one solve by averaging 10 solves with different right-hand side vectors. The results are given in Tables~\ref{table:TH-precond}--\ref{table:CGDG-precond}. 
One can see that the numbers of iteration in Tables~\ref{table:BP-precond}-\ref{table:CGDG-precond} are quite robust for different parameter values and mesh refinements. 
In the results, the stabilized methods have significantly less computational times for same meshes, so they can be advantageous to accelerate simulations but the price to pay is the low accuracy of stabilized methods as we have seen before.

\section{Conclusion}
In this paper we studied the three-field formulation of the Biot model which has the displacement, the total pressure, and the pore pressure as unknowns.
We first carried out a comprehensive investigation of the a priori estimate of the continuous problem. Then we studied finite element discretization with parameter-robust stability, and parameter-robust preconditioning of the discretizations.
For finite element discretizations we considered standard mixed finite element as well as stabilized methods for the Stokes equations, and complete error estimates of semidiscrete solutions of the Biot model are proved. For parameter-robust preconditioning, we showed parameter-robust stability of the system and derived an abstract form of robust preconditioners. The theoretical results are illustrated with numerical experiments.

%
%
%
%
%
%
%

\providecommand{\bysame}{\leavevmode\hbox to3em{\hrulefill}\thinspace}
\providecommand{\MR}{\relax\ifhmode\unskip\space\fi MR }
\providecommand{\MRhref}[2]{%
  \href{http://www.ams.org/mathscinet-getitem?mr=#1}{#2}
}
\providecommand{\href}[2]{#2}

\bibliographystyle{amsplain}

\begin{thebibliography}{10}

\bibitem{anandarajah2010computational}
A.~Anandarajah, \emph{Computational methods in elasticity and plasticity:
  Solids and porous media}, SpringerLink : B{\"u}cher, Springer New York, 2010.

\bibitem{BaerlandLeeMardalWinther2017}
Trygve B\ae~rland, Jeonghun~J. Lee, Kent-Andre Mardal, and Ragnar Winther,
  \emph{Weakly imposed symmetry and robust preconditioners for {B}iot's
  consolidation model}, Comput. Methods Appl. Math. \textbf{17} (2017).

\bibitem{bause2017space}
M~Bause, FA~Radu, and U~K{\"o}cher, \emph{Space--time finite element
  approximation of the {Biot} poroelasticity system with iterative coupling},
  Comput. Methods in Appl. Mech. Eng. \textbf{320} (2017), 745--768.

\bibitem{berger2015stabilized}
Lorenz Berger, Rafel Bordas, David Kay, and Simon Tavener, \emph{Stabilized
  lowest-order finite element approximation for linear three-field
  poroelasticity}, SIAM J. Sci. Comp. \textbf{37} (2015), no.~5, A2222--A2245.

\bibitem{MR0066874}
M.~A. Biot, \emph{Theory of elasticity and consolidation for a porous
  anisotropic solid}, J. Appl. Phys. \textbf{26} (1955), 182--185. \MR{0066874
  (16,643f)}

\bibitem{BrezziPitkaranta1984}
F.~Brezzi and J.~Pitk\"aranta, \emph{On the stabilization of finite element
  approximations of the {S}tokes equations}, Efficient solutions of elliptic
  systems ({K}iel, 1984), Notes Numer. Fluid Mech., vol.~10, Friedr. Vieweg,
  Braunschweig, 1984, pp.~11--19. \MR{804083}

\bibitem{NabilRiviere2018}
Nabil Chaabane and B\'eatrice Rivi\`ere, \emph{A sequential discontinuous
  {G}alerkin method for the coupling of flow and geomechanics}, J. Sci. Comput.
  \textbf{74} (2018), no.~1, 375--395. \MR{3742883}

\bibitem{ChenLuoFeng2013}
Yumei Chen, Yan Luo, and Minfu Feng, \emph{Analysis of a discontinuous
  {G}alerkin method for the {B}iot's consolidation problem}, Appl. Math.
  Comput. \textbf{219} (2013), no.~17, 9043--9056. \MR{3047799}

\bibitem{Feng-Ge-Li}
Xiaobing Feng, Zhihao Ge, and Yukun Li, \emph{Analysis of a multiphysics finite
  element method for a poroelasticity model}, IMA J. Numer. Anal. \textbf{38}
  (2018), no.~1, 330--359. \MR{3800024}

\bibitem{FrancaHughesStenberg2008}
Leopoldo~P. Franca, Thomas J.~R. Hughes, and Rolf Stenberg, \emph{Stabilized
  finite element methods}, Incompressible computational fluid dynamics: trends
  and advances, Cambridge Univ. Press, Cambridge, 2008, pp.~87--107.
  \MR{2504357}

\bibitem{Fu}
Guosheng Fu, \emph{A high-order {HDG} method for the {B}iot's consolidation
  model}, preprint (2018).

\bibitem{Hong-Kraus}
Qingguo Hong and Johannes Kraus, \emph{Parameter-robust stability of classical
  three-field formulation of {B}iot's consolidation model}, preprint (2017).

\bibitem{hu2017nonconforming}
Xiaozhe Hu, Carmen Rodrigo, Francisco~J. Gaspar, and Ludmil~T. Zikatanov,
  \emph{A nonconforming finite element method for the {B}iot's consolidation
  model in poroelasticity}, J. Comput. Appl. Math. \textbf{310} (2017),
  143--154. \MR{3544596}

\bibitem{KechkarSilvester1992}
Nasserdine Kechkar and David Silvester, \emph{Analysis of locally stabilized
  mixed finite element methods for the {S}tokes problem}, Math. Comp.
  \textbf{58} (1992), no.~197, 1--10. \MR{1106973}

\bibitem{KorsaweStarke2005}
Johannes Korsawe and Gerhard Starke, \emph{A least-squares mixed finite element
  method for {B}iot's consolidation problem in porous media}, SIAM J. Numer.
  Anal. \textbf{43} (2005), no.~1, 318--339 (electronic). \MR{2177147
  (2006g:65187)}

\bibitem{Lee2016}
Jeonghun~J. Lee, \emph{Robust error analysis of coupled mixed methods for
  {B}iot's consolidation model}, J. Sci. Comput. \textbf{69} (2016), no.~2,
  610--632. \MR{3551338}

\bibitem{Lee2018}
\bysame, \emph{Robust three-field finite element methods for {B}iot's
  consolidation model in poroelasticity}, BIT \textbf{58} (2018), no.~2,
  347--372. \MR{3803860}

\bibitem{LeeEtAl2017}
Jeonghun~J. Lee, Kent-Andre Mardal, and Ragnar Winther, \emph{Parameter-robust
  discretization and preconditioning of {Biot's} consolidation model}, SIAM J.
  Sci. Comp. \textbf{39} (2017), no.~1, A1--A24.

\bibitem{MuradLoula1992}
M{\'a}rcio~A. Murad and Abimael F.~D. Loula, \emph{Improved accuracy in finite
  element analysis of {B}iot's consolidation problem}, Comput. Methods Appl.
  Mech. Engrg. \textbf{95} (1992), no.~3, 359--382. \MR{1156589 (92m:65148)}

\bibitem{MuradLoula1994}
\bysame, \emph{On stability and convergence of finite element approximations of
  {B}iot's consolidation problem}, Internat. J. Numer. Methods Engrg.
  \textbf{37} (1994), no.~4, 645--667. \MR{1257948 (94k:73073)}

\bibitem{MuradThomeeLoula1996}
M{\'a}rcio~A. Murad, Vidar Thom{\'e}e, and Abimael F.~D. Loula,
  \emph{Asymptotic behavior of semidiscrete finite-element approximations of
  {B}iot's consolidation problem}, SIAM J. Numer. Anal. \textbf{33} (1996),
  no.~3, 1065--1083. \MR{1393902 (97g:65245)}

\bibitem{OyarzuaRuizBaier2016}
Ricardo Oyarz{\'u}a and Ricardo Ruiz-Baier, \emph{Locking-free finite element
  methods for poroelasticity}, SIAM J. Numer. Anal. \textbf{54} (2016), no.~5,
  2951--2973.

\bibitem{PhillipsWheeler2007a}
Phillip~Joseph Phillips and Mary~F. Wheeler, \emph{A coupling of mixed and
  continuous {G}alerkin finite element case}, Comput. Geosci. \textbf{11}
  (2007), no.~2, 131--144. \MR{2327964 (2008g:74034a)}

\bibitem{PhillipsWheeler2007b}
\bysame, \emph{A coupling of mixed and continuous {G}alerkin finite element
  methods for poroelasticity. {II}. {T}he discrete-in-time case}, Comput.
  Geosci. \textbf{11} (2007), no.~2, 145--158. \MR{2327966 (2008g:74034b)}

\bibitem{rodrigo2017new}
Carmen Rodrigo, Xiaozhe Hu, Peter Ohm, James Adler, Francisco~J Gaspar, and
  Ludmil Zikatanov, \emph{New stabilized discretizations for poroelasticity and
  the {Stokes'} equations}, arXiv preprint arXiv:1706.05169 (2017).

\bibitem{MR1790411}
R.~E. Showalter, \emph{Diffusion in poro-elastic media}, J. Math. Anal. Appl.
  \textbf{251} (2000), no.~1, 310--340. \MR{1790411 (2001m:74008)}

\bibitem{VermeerVerruijt1981}
P.~A. Vermeer and A.~Verruijt, \emph{An accuracy condition for consolidation by
  finite elements}, Internat. J. Numer. Analyt. Methods Geomech. \textbf{5}
  (1981), no.~1, 1--14. \MR{603175 (81m:76050)}

\bibitem{Yi2013}
Son-Young Yi, \emph{A coupling of nonconforming and mixed finite element
  methods for {B}iot's consolidation model}, Numerical Methods for Partial
  Differential Equations \textbf{29} (2013), no.~5, 1749--1777.

\bibitem{Yosida-book}
Kosaku Yosida, \emph{Functional analysis}, 6th ed., Springer Classics in
  Mathematics, Springer-Verlag, 1980.

\bibitem{ZienkiewiczShiomi1984}
O.~C. Zienkiewicz and T.~Shiomi, \emph{Dynamic behaviour of saturated porous
  media; the generalized biot formulation and its numerical solution},
  International Journal for Numerical and Analytical Methods in Geomechanics
  \textbf{8} (1984), no.~1, 71--96.

\end{thebibliography}
\vspace{.125in}

\end{document}